\def\boxit#1{\vbox{\hrule\hbox{\vrule\kern6pt
          \vbox{\kern6pt#1\kern6pt}\kern6pt\vrule}\hrule}}

\newcommand{\calI}{{\cal I}}
\newcommand{\calJ}{{\cal J}}

\newcommand{\PK}{\textbf{\rm PK}}

\newcommand{\bff}{{\bf f}}

\newcommand{\bfMult}{{\bf Mult}}

\newcommand{\veps}{\varepsilon}

\newcommand{\todr}{\stackrel{\mathrm{D}}{\longrightarrow}}

\newcommand{\eqdr}{\stackrel{\mathrm{D}}{=}}

\newcommand{\R}{\Bbb{R}}

\newcommand{\N}{\Bbb{N}}

\newcommand{\rmd}{{\rm d}}
\newcommand{\rmi}{{\rm i}}
\newcommand{\halmos}{\quad\hfill\mbox{$\Box$}}

\newcommand{\wtilde}{\widetilde}

\newcommand{\wh}{\widehat}

\newcommand{\dto}{\downarrow}

\newcommand{\be}{\begin{equation}}
\newcommand{\ee}{\end{equation}}
\newcommand{\bea}{\begin{eqnarray}}
\newcommand{\eea}{\end{eqnarray}}
\newcommand{\bean}{\begin{eqnarray*}}
\newcommand{\eean}{\end{eqnarray*}}
\newcommand{\ben}{\begin{equation*}}
\newcommand{\een}{\end{equation*}}
\newcommand{\ba}{\begin{align}}
\newcommand{\ea}{\end{align}}

\def\nexto{\kern -0.54em}

\newcommand{\PP}{\textbf{\rm P}}
\newcommand{\EE}{\textbf{\rm E}}
\newcommand{\PD}{\textbf{\rm PD}}

\newcommand{\bfM}{{\bf M}}
\newcommand{\bfm}{{\bf m}}
\newcommand{\bfp}{{\bf p}}
\newcommand{\bfq}{{\bf q}}
\newcommand{\bfQ}{{\bf Q}}

\newcommand{\bfu}{{\bf u}}

\newcommand{\lf}{\lfloor}
\newcommand{\rf}{\rfloor}
\newcommand{\lc}{\lceil}
\newcommand{\rc}{\rceil}

\documentclass[11pt]{article}
\usepackage{graphicx}
\usepackage[shortlabels]{enumitem}
\usepackage[utf8]{inputenc}
\usepackage[english]{babel}
\usepackage{amssymb,amsmath,amsthm, amsfonts}
\usepackage[T1]{fontenc}
\usepackage{scrextend}
\usepackage[text={7in,10in},centering]{geometry}
\usepackage{cases}
\usepackage{grffile}
\usepackage{caption}

\RequirePackage[OT1]{fontenc}
\RequirePackage{amsthm,amsmath}
\RequirePackage[numbers]{natbib}
\RequirePackage[colorlinks,citecolor=blue,urlcolor=blue]{hyperref}

\usepackage[T1]{fontenc} 
\usepackage{lmodern} 
\usepackage[ddmmyyyy]{datetime}

\numberwithin{equation}{section}
\theoremstyle{plain}

\newtheorem{theorem}{Theorem}[section]

\newtheorem{corollary}{Corollary}[section]
\newtheorem{lemma}{Lemma}[section]

\begin{document}
\bibliographystyle{plain}

\title{Limit Theorems  for the Pitman-Yor Frequency Spectrum}

\author{ Ross Maller$^a$ and Soudabeh Shemehsavar$^{b}$
\thanks{Email:  Ross.Maller@anu.edu.au;  soudabeh.shemehsavar@murdoch.edu.au 
($^*$corresponding author).}}

\maketitle

\begin{abstract}\ 
We  derive a  general distribution formula  , 
 applicable to  a wide variety of  Gibbs-type partitions 
and use it to obtain   large sample results for linear combinations of  the 
 component frequency spectrum $(M_{jn})_{1\le j\le n}$ 
associated with a random partitioning of $\{1,2,\ldots, n\}$.
The two-parameter Pitman-Yor sampling model  is analysed in detail
and asymptotic distributions of sums of the form 
$\sum _{j=\lf \lambda n\rf}^{\lf \mu n\rf} M_{jn}$, $0<\lambda\le \mu\le 1$, are obtained.
Our results suggest a possible functional limit theorem for  
$\sum _{j=\lf \lambda n\rf}^{n} M_{jn}$.. 
\end{abstract}

\noindent {{\bf Keywords:}
Random partitions; component frequency spectrum;
Pitman-Yor sampling formula; 
shape of Young diagram; 
Gibbs distribution.

{}\null
\vskip-1cm 
\section{Introduction}\label{intro} 
Models for the distribution of partition sizes in a random partitioning of a finite set are important in a variety of areas, including statistical mechanics, combinatorics, genetics, and others, 
and have received close and continuing attention over the years. 
As in   \cite{Pitman2006}, p.14,  we partition the set $\N_n=\{1,2,\ldots,n\}$ into blocks $(M_{jn})_{1\le j\le n}$,
where   $M_{jn}$ is the number of partitions of size $j$.
These are collected into the vector\footnote{Vectors and matrices are denoted in boldface, with a superscript $T$ for transpose.
Pitman   \cite{Pitman2006}, p.15, uses the notation $|\Pi_{nj}|$ where we have $M_{nj}$, with $|\Pi_n|$ for our $K_n$.} 
$\bfM_n= (M_{1n}, M_{2n},  \ldots, M_{nn})$.
We denote by  $K_n =\sum_{j=1}^n M_{jn}$   the total number of blocks.
The  random variable (``rv")
$K_n$ takes values $k\in \N_n$, $n\in\N$, 
 and  $\bfM_n$ takes values in the set
\be\label{4.4a0}
A_{kn}=\Big\{\bfm= (m_1,\ldots,m_n)\in (\{ 0\}\cup \N_n)^n, \, \sum_{j=1}^n m_j=k,\, \sum_{j=1}^n jm_j=n\Big\}.
\ee
In the theory of decomposable random combinatorial structures, $\bfM_n$ is called the {\it component frequency spectrum}; in genetics, it is the 
{\it allele frequency spectrum}.

Suppose   $(\bfM_n, K_n)$
has a distribution of the form:
\be\label{N1}
\PP(\bfM_n=\bfm,\, K_n=k)
= C_{nk}\prod_{j=1}^n \frac{q_j^{m_j}}{m_j!},
\ n\in \N,\, k\in \N_n   
\ee
  (\cite{Pitman2006}, p.26),
 where $q_j>0$, $1\le j\le n$, 
 $\bfm= (m_1,\ldots,m_n)\in A_{kn}$,  and $C_{nk}>0$
  is a normalising constant depending on $k$ and $n$ and possibly other parameters.  $K_n =\sum_{j=1}^n M_{jn}$  is a deterministic function of $\bfM_n$ but it is useful  to include it explicitly in the formulation \eqref{N1}.
  Asymptotic properties of $K_n$ are known for a variety of  models, but our aim is to study the limiting behaviour  of the entire spectrum, joint with $K_n$, assuming just \eqref{N1}.
   To do this we adopt a Cram\'er-Wold device,  considering linear combinations of the elements of $\bfM_n$; specifically, we analyse the convergence of functionals like $\sum_{j=1}^n 
u_{jn} M_{jn}$, for $n$-vectors $\bfu_n =(u_{jn}, 1\le j\le n)\in\R^n$.
A distinctive feature is that the length of the vector grows with $n$.
  
  A  special case of interest is the 
Pitman-Yor  sampling model in \cite{PY1997}.   
The asymptotic (large sample) behaviour of $K_n$ is known 
from \cite{Pitman2006}, p.68.
To add in  the frequency spectrum  we analyse $K_n$ jointly with  functionals like
$\sum_{j=1}^n f(j/n)M_{jn}$, for suitable
functions $f$ on $[0,1]$ (bounded variation suffices, see
the proof of Lemma \ref{ylem}), 
 with  $\lim_{y\dto 0}f(y)=f(0)=0$, satisfying  a certain growth condition at 0.    

We begin with a general result on 
Gibbs-type partitions
which is specialized in  Section \ref{news} for 
the Pitman-Yor model. 
 Proofs are deferred to Section \ref{proofs} and an  Appendix. 

\section{Gibbs-Type Partitions}\label{gen}
 Recalling \eqref{N1} and the setup in Section \ref{intro},  
    let  $q_{+n}= \sum _{j=1}^nq_j$ and define the $n$-vector
 \be\label{qnvec}
 \bfq_n=(q_{1n}, \ldots, q_{nn}) 
 =\frac{1}{q_{+n} }  (q_{1}, \ldots, q_{n}),
 \ee
 which will serve as a centering vector.
 Let 
 $\bfu_n =(u_{jn})_{1\le j\le n}=
(u_{1n}, \ldots, u_{nn})$ be an arbitrary $n$-vector.
Our results stem from the following 
 formula for the 
 moment generating function  (mgf) of   $\bfM_n/K_n$, joint with 
$K_n$,  which is proved in  Section \ref{proofs}.

\begin{theorem}\label{thm2N}
 Assume \eqref{N1}. We have for  $\nu\in\R$, 
$\bfu_n\in\R^n$ and $k\in \N_n$, 
  \bea\label{ecenN}
  &&
\EE\Big(
\exp\Big(\nu\bfu_n^T\Big(\frac{\bfM_n}{K_n}-\bfq_n\Big)\Big); K_n=k\Big)
\cr
&&  \hskip1cm 
=
\EE\Big(\exp\Big(
  \frac{ \nu }{k} \sum_{i=1}^{k}
   \big(V_{in}^{(\bfu_n)}- \EE(V_{in}^{(\bfu_n)})\big)\Big)\Big)\times
\frac{\PP\Big(\sum_{i=1}^{k} \wh X_{in}^{(\nu\bfu_n)}
=n\Big)}
{ \PP\Big(\sum_{i=1}^{k} \wh X_{in}^{(0)}=n\Big)}
 \times \PP(K_n=k).
 \eea
 The 
 $\big(\wh X_{in}^{(\nu{\bfu_n}  )}\big)_{1\le i\le k}$ are i.i.d. with the distribution 
\be\label{HM8Nn}
\PP\big(\wh X_{1n}^{(\nu{\bfu_n}  )}=j\big) 
=
\frac{q_{j}e^{ \nu u_{jn}/k} }
{\sum_{\ell=1}^n q_{\ell } e^{ \nu u_{\ell n}/k}}, \ 1\le j\le n,
\ee
the  $\big(\wh X_{in}^{(0 )}\big)_{1\le i\le k}$ are i.i.d. with
\be\label{H0}
\PP\big(\wh X_{1n}^{(0 )}=j\big) 
= q_{jn} 
:= \frac{q_j}{q_{+n}}
=
\frac{q_{j} }
{\sum_{\ell=1}^n q_{\ell } },\ 1\le j\le n,
\ee
and the  $(V_{in}^{(\bfu_n)})_{1\le i\le k}$  are i.i.d. with 
 \be\label{dvN}
 P(V_{1n}^{(\bfu_n)}= u_{jn})= q_{jn},\ 1\le j\le n.
 \ee
     The $V_{in}^{(\bfu_n)}$ have 
     \be\label{EV}
      \EE(V_{1n}^{(\bfu_n)})= \bfu_n^T\bfq_n
      \quad {\rm  and}\quad
       {\rm Var}(V_{1n}^{(\bfu_n)})= \bfu_n^T\bfQ_n\bfu_n,
       \ee
 where the matrix $\bfQ_n$ is  the $n\times n$ matrix  with 
diagonal elements $q_{jn}(1-q_{jn})$ and 
off-diagonal elements $-q_{jn} q_{\ell n}$, $1\le j\ne \ell \le n$.
\end{theorem}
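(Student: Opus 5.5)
The plan is to compute the left side of \eqref{ecenN} directly from \eqref{N1} and then recognise the resulting combinatorial sum as a probability for a sum of i.i.d.\ variables, using the standard multinomial-to-i.i.d.\ identity. On $\{K_n=k\}$ the exponent equals $\frac{\nu}{k}\sum_j u_{jn}m_j-\nu\bfu_n^T\bfq_n$. So by \eqref{N1} the left side equals
\[
e^{-\nu\bfu_n^T\bfq_n}\,C_{nk}\sum_{\bfm\in A_{kn}}\prod_{j=1}^n\frac{(q_je^{\nu u_{jn}/k})^{m_j}}{m_j!}.
\]

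The key identity is the following. For i.i.d.\ $X_1,\ldots,X_k$ on $\{1,\ldots,n\}$ with $\PP(X_1=j)=p_j/p_+$, we group the sequences $(x_1,\ldots,x_k)$ with $\sum x_i=n$ by their counts $m_j=\#\{i:x_i=j\}$. Each $\bfm\in A_{kn}$ arises from $k!/\prod m_j!$ sequences, so
\[
\PP\Big(\sum_{i=1}^kX_i=n\Big)=\frac{k!}{p_+^k}\sum_{\bfm\in A_{kn}}\prod_j\frac{p_j^{m_j}}{m_j!}.
\]
We apply this with $p_j=q_je^{\nu u_{jn}/k}$, which gives the $\wh X^{(\nu\bfu_n)}$ of \eqref{HM8Nn}, and also with $\nu=0$, which gives \eqref{H0}. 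Then:
\begin{itemize}
\item the $\nu=0$ case together with \eqref{N1} yields $\PP(K_n=k)=C_{nk}\,q_{+n}^k\,\PP(\sum_i\wh X_{in}^{(0)}=n)/k!$;
\item the general case yields the sum above as $\big(\sum_\ell q_\ell e^{\nu u_{\ell n}/k}\big)^k\PP(\sum_i\wh X_{in}^{(\nu\bfu_n)}=n)/k!$.
\end{itemize}
Eliminating $C_{nk}$ between these, the left side of \eqref{ecenN} becomes
\[
e^{-\nu\bfu_n^T\bfq_n}\Big(\frac{\sum_\ell q_\ell e^{\nu u_{\ell n}/k}}{q_{+n}}\Big)^k\frac{\PP(\sum\wh X^{(\nu\bfu_n)}_{in}=n)}{\PP(\sum\wh X^{(0)}_{in}=n)}\PP(K_n=k).
\]

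It remains to identify the first two factors. By \eqref{dvN}, $\sum_\ell q_{\ell n}e^{\nu u_{\ell n}/k}=\EE e^{\nu V_{1n}^{(\bfu_n)}/k}$. By independence, its $k$-th power is $\EE\exp(\frac{\nu}{k}\sum_{i=1}^k V_{in}^{(\bfu_n)})$. Multiplying by $e^{-\nu\bfu_n^T\bfq_n}=\exp(-\frac{\nu}{k}\sum_{i=1}^k\EE V_{in}^{(\bfu_n)})$ gives exactly the centred expectation in \eqref{ecenN}, once we know $\EE V_{1n}^{(\bfu_n)}=\bfu_n^T\bfq_n$. Finally, \eqref{EV} is immediate from \eqref{dvN}. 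The mean is $\sum_j u_{jn}q_{jn}$. The variance is $\sum_j u_{jn}^2q_{jn}-(\sum_j u_{jn}q_{jn})^2$, and expanding the square gives $\bfu_n^T\bfQ_n\bfu_n$ with the stated diagonal and off-diagonal entries.

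There is no real obstacle. The only delicate point is the bookkeeping in the multinomial identity: the factor $k!/\prod m_j!$ must cancel consistently between numerator and denominator. Positivity of $\PP(\sum\wh X^{(0)}_{in}=n)$ also needs checking; it follows because all $q_j>0$ and $k\le n$, so $A_{kn}\ne\emptyset$.
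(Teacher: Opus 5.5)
Your proof is correct and is essentially the paper's argument taken directly at $J=n$: the paper likewise expresses both $C_{nk}\sum_{A_{kn}}\prod_j (q_je^{\nu u_{jn}/k})^{m_j}/m_j!$ and $\PP(K_n=k)$ through the identity of Lemma~\ref{LMu}, and then divides to eliminate $C_{nk}$ (see \eqref{M4.2N1} and \eqref{cN2}). The differences are minor: you prove that identity by counting sequences rather than by Poisson conditioning and a multinomial mgf; you skip the intermediate first-$J$-components formula \eqref{cN}, which this statement does not need; and your positivity check via $A_{kn}\neq\emptyset$ and $q_j>0$ is the right one, whereas the paper's remark about the event $\{\wh X_{in}^{(0)}=1,\ 1\le i\le k\}$ only covers $k=n$.
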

 The norming of $\bfM_n$ by the rv $K_n$ in \eqref{ecenN} 
is convenient, making the sum $\sum_{j=1}^n M_{jn}/K_n$ equal to 1, but 
it can be replaced by a deterministic norming
in cases when  $K_n$ converges.
The mgf in \eqref{ecenN} can be written as 
$\EE\big(\exp\big(
 \nu  (V_{1n}^{(\bfu_n)}- \EE(V_{1n}^{(\bfu_n)})\big)\big)$, 
but in its present form \eqref{ecenN} shows that the asymptotics of $\bfM_n$ can be related to the convergence of triangular arrays of rvs, i.i.d. in each row.
The event $\{\wh X_{in}^{(0 )}=1, 1\le i\le k\}$ has probability 
$\prod_{j=1}^kq_{jn}>0$, so the denominator in \eqref{ecenN},  and hence $ \PP(K_n=k)$ (see   \eqref{M4.2N1}), 
 are positive for $k\in \N_n$, $n\in\N$.
  Next we apply these formulae to the Pitman-Yor 2-parameter model.
     
\section{The  Two-Parameter Pitman-Yor  Sampling Formula}\label{news}
The Pitman-Yor \cite{PY1997}, p.897,  sampling formula, depending on parameters 
$0<\alpha<1$,  $\theta>-\alpha$, is 
\be\label{FSFN}
\PP(\bfM_n(\alpha,\theta) =\bfm,\, K_n(\alpha,\theta) =k)
= \frac{n!}{\alpha}
\frac{\Gamma(\theta/\alpha+k)}{\Gamma(\theta/\alpha+1)}
\frac{\Gamma(\theta+1)}{\Gamma(n+\theta)}
\prod_{j=1}^n 
\frac{1}{m_j!} 
\Big( \frac{\alpha \Gamma(j-\alpha)}{j!\Gamma(1-\alpha)} \Big)^{m_j},
\ee
where $1\le k\le n$ and $\bfm=(m_1,\ldots,m_n)\in A_{kn}$
(in \eqref{4.4a0})..
For this model define 
\be\label{qdef}
q_j = \frac{\alpha \Gamma(j-\alpha)}{j!\Gamma(1-\alpha)}, 
\quad
q_{+n}= \sum_{j=1}^n q_{j}
\quad {\rm and}\quad
q_{jn}
= \frac{q_j}{q_{+n} }= \frac{q_j}{ \sum_{\ell=1}^n q_{\ell}},
\ 1\le j\le n.  
\ee
Then  \eqref{FSFN} is of the form \eqref{N1} with  $C_{nk}=C_{nk}(\alpha,\theta)
=n!\Gamma(\theta/\alpha+k)\Gamma(\theta+1)/\alpha\Gamma(\theta/\alpha+1)\Gamma(n+\theta)$
and  the $q_j$ in \eqref{qdef}, so we can apply Theorem \ref{thm2N}. 
Note that $\sum_{j\ge 1}q_j=1$.
It's useful to identify  $\bfM_n$ and $K_n$ by attaching parameters such as the $(\alpha, \theta)$ in \eqref{FSFN}.
  We assume   the function $f$ introduced in Section \ref{intro}   decreases to 0 at 0 fast enough for 
\be\label{ig}
\int_0^1 y^{-\alpha-1}|f(y)|\rmd y<\infty,
\ee
and let $\bff_n$ denote the vector $(f(j/n))_{1\le j\le n}$, which we will take for the $\bfu_n$ in \eqref{ecenN}.
For this model it turns out that the centering in \eqref{ecenN} and the norming by $K_n$  can be dispensed with.
  It is known that for the Pitman-Yor model, $K_n(\alpha,\theta) /n^\alpha$ has a limiting Mittag-Leffler distribution almost surely  (\cite{Pitman2006}, p.68), which explains our choice for the conditioning  variable  in the next result.
   The Mittag-Leffler distribution has a finite mgf equal to the Mittag-Leffler function $E_\alpha(\nu)=
 \sum_{\ell=0}^\infty  \nu^\ell/\Gamma(1+\alpha \ell)$.
 \begin{theorem}\label{2para0}
 For the Pitman-Yor 2-parameter model
  when \eqref{ig}  holds, 
 we have for  $ \nu\in\R$ and $x>0$
  \bea\label{ST}
  &&
\lim_{n\to\infty} n^\alpha
 \EE  \big( \exp\big(\nu  
\bff_n^T \bfM_{n}(\alpha,\theta)\big); 
K_n(\alpha,\theta) =\lfloor x n^\alpha\rfloor \big)\cr
&&
=
\exp\Big(\frac{x\alpha}{\Gamma(1-\alpha)}
   \int_0^1 \big(
e ^{\nu  f(y) }  -1\big) y^{-\alpha-1} \rmd y\Big)
\times
\frac{f_{Y_x^{(\nu,f)}}(1)}{f_{Y_x^{(0)}}(1)}
\times
\frac{\alpha x^{\theta/\alpha}\Gamma(\theta)}{\Gamma(\theta/\alpha)}\times 
g_\alpha(x), 
 \eea
  where    $g_\alpha(x)$ is the Mittag-Leffler density (\cite{{Pitman2006}}, p.11).
 Here each  $Y_x^{(\nu,f)}$ is an infinitely divisible  rv  with support on $[0,\infty)$ and a continuous density $f_{Y_x^{(\nu,f)}}$, which is positive on $(0,\infty)$, and 
  $f_{Y_x^{(\nu,f)}}(1)$ is  the density  at the point 1
 of  $Y_x^{(\nu,f)}$,
given as the absolutely convergent integral
 \be\label{f1}
f_{Y_x^{(\nu,f)}}(1)  =
  \frac{1}{2\pi}\int _{-\infty}^\infty e^{-\rmi \tau}
  \exp\Big(\frac{\alpha x}{\Gamma(1-\alpha)}
  \int_{0}^1
 (e^{ \rmi\tau  y} -1 ) e^{\nu  f(y)} y^{-\alpha-1} \rmd y
 \Big) \rmd \tau.
 \ee
$f_{Y_x^{(0)}}(1)$ is obtained by setting $\nu=0$ in \eqref{f1}.
  \end{theorem}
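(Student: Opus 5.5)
Rather than using the centred form \eqref{ecenN}, I would redo the conditioning argument behind Theorem \ref{thm2N} without the $1/K_n$ norming, which is the natural form here. Write $k=k_n=\lfloor xn^\alpha\rfloor$. By \eqref{N1}, $\PP(\bfM_n=\bfm,K_n=k)=C_{nk}\prod_j q_j^{m_j}/m_j!$. Multiplying by $e^{\nu\sum_j f(j/n)m_j}$ replaces $q_j$ by $q_je^{\nu f(j/n)}$. The multinomial identity $\sum_{\bfm\in A_{kn}}k!\prod_j p_j^{m_j}/m_j!=\PP(\sum_{i=1}^k X_i=n)$ for i.i.d.\ $X_i$ with law $(p_j)$ then gives
\[
\EE\big(e^{\nu\bff_n^T\bfM_n};K_n=k\big)=\Big(\frac{\sum_{j\le n}q_je^{\nu f(j/n)}}{q_{+n}}\Big)^{k}\,\frac{\PP(\sum_{i=1}^k\wh X_{in}^{(\nu\bff_n)}=n)}{\PP(\sum_{i=1}^k\wh X_{in}^{(0)}=n)}\,\PP(K_n=k),
\]
where $\wh X^{(\nu\bff_n)}$ now has law proportional to $q_je^{\nu f(j/n)}$. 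This is the analogue of \eqref{ecenN} with $\nu/k$ replaced by $\nu$, and the proof is the same.

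The three factors are then handled in turn.

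\emph{The power term.} Since $q_j\sim\alpha j^{-1-\alpha}/\Gamma(1-\alpha)$ and $q_{+n}=1-O(n^{-\alpha})$,
\[
\sum_j q_j(e^{\nu f(j/n)}-1)\sim n^{-\alpha}\frac{\alpha}{\Gamma(1-\alpha)}\int_0^1(e^{\nu f(y)}-1)y^{-\alpha-1}\rmd y .
\]
This is a Riemann-sum argument. The integral converges because $|e^{\nu f}-1|\le C|f|$ with $f$ bounded, together with \eqref{ig}. Near $0$, where $j$ is small relative to $n$, I would use bounded variation and the continuity $f(0+)=0$ to control the sum, as in Lemma \ref{ylem}. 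Raising $1+(\text{this}+O(n^{-\alpha}\cdot\text{small}))$ to the power $k\sim xn^\alpha$ gives the exponential factor. The denominator contributes only through $q_{+n}$, and its $n^{-\alpha}$ tail terms cancel between the numerator and the denominator of the ratio.

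\emph{The local term.} This is the main obstacle. I need a local limit theorem: $n\,\PP(\sum_{i\le k}\wh X_{in}^{(\nu\bff_n)}=n)\to f_{Y_x^{(\nu,f)}}(1)$. Writing $S=\sum_{i\le k}\wh X_{in}/n$, the characteristic function is $\big(\sum_j p_{jn}e^{\rmi\tau j/n}\big)^k$. Here $p_{jn}$ is proportional to $q_je^{\nu f(j/n)}$, and the normaliser tends to $1$. By the same Riemann-sum computation this converges to
\[
\exp\Big(\frac{\alpha x}{\Gamma(1-\alpha)}\int_0^1(e^{\rmi\tau y}-1)e^{\nu f(y)}y^{-\alpha-1}\rmd y\Big),
\]
the characteristic function of the infinitely divisible law $Y_x^{(\nu,f)}$. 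Its Lévy measure has density of order $y^{-\alpha-1}$ near $0$, with $e^{\nu f}\to1$ there, so it has infinite mass and the law is supported on $[0,\infty)$.

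For absolute integrability of $\phi_Y$, and hence a continuous density, I would note that the real part of the exponent is at most $-c|\tau|^\alpha$ for large $|\tau|$. This follows from $\int_0^1(1-\cos\tau y)y^{-\alpha-1}\rmd y\asymp|\tau|^\alpha$, using $e^{\nu f}\ge e^{-|\nu|\sup|f|}$.

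For the local limit I would use the lattice inversion formula
\[
n\PP(nS=n)=\frac1{2\pi}\int_{-\pi n}^{\pi n}e^{-\rmi\tau}\phi_{S}(\tau)\rmd\tau .
\]
The range is split into $|\tau|\le A$, handled by dominated convergence, and $A<|\tau|\le\pi n$. On the second range I need a uniform bound $|\phi_S(\tau)|\le\exp(-c\min(|\tau|^\alpha,n^\alpha))$. This comes from $1-\mathrm{Re}\sum_jp_{jn}e^{\rmi\tau j/n}\ge c'\sum_j j^{-1-\alpha}(1-\cos(\tau j/n))\ge c''(|\tau|/n)^\alpha$ for $|\tau|\le\pi n$, using aperiodicity, since $p_{1n}>0$. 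Raised to the power $k\asymp n^\alpha$, this gives integrable domination and makes the range $|\tau|\in[\delta n,\pi n]$ exponentially small. Positivity of the density on $(0,\infty)$ follows from standard results for infinitely divisible laws on $[0,\infty)$ with zero drift and infinite Lévy measure whose support contains $0$.

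\emph{The probability term.} Finally, $n^\alpha\PP(K_n=\lfloor xn^\alpha\rfloor)$ is identified by putting $\nu=0$ in the display above, which forces the first two factors to equal $1$. One then takes the known local limit for $K_n$, which I would derive from \eqref{FSFN} via $\PP(K_n=k)=C_{nk}q_{+n}^k\PP(\sum_{i\le k}\wh X^{(0)}_{in}=n)/k!$. Stirling's formula for $C_{nk}/k!$ combined with the local limit just proved at $\nu=0$ gives $n^\alpha\PP(K_n=k)\to \alpha x^{\theta/\alpha}\Gamma(\theta)g_\alpha(x)/\Gamma(\theta/\alpha)$, which matches the stated constant. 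The last step is to multiply the three limits.
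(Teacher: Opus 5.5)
Your overall route is the paper's. Your factorization is \eqref{TS-} with $b_n=k_n$, and your power term is $(\EE e^{\nu V_{1n}})^{k_n}$. The three factors are then handled as in Lemma~\ref{vcon}, Lemma~\ref{ylem} and \eqref{3}--\eqref{3b}. Getting the characteristic-function limit directly by Riemann sums, instead of through Kallenberg's triangular-array conditions, is a harmless simplification.

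The step that fails as written is the domination on $A<|\tau|\le\pi n$, which you rightly call the crux. You bound $1-\mathrm{Re}\,\phi_n(t)$ from below, with $t=\tau/n$. But $|\phi_S(\tau)|=|\phi_n(t)|^{k}$ is governed by $1-|\phi_n(t)|$, and a lower bound on $1-\mathrm{Re}\,\phi_n$ says nothing about $1-|\phi_n|$. For instance, a point mass at $1$ has $|\phi|\equiv 1$ while $1-\mathrm{Re}\,\phi=1-\cos t$. This matters here. The untruncated $q_j$ have generating function $1-(1-s)^\alpha$, so $1-\sum_j q_j e^{\rmi tj}=(1-e^{\rmi t})^\alpha$, whose imaginary part is of the same order as its real part. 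Since $1-|\phi|^2=2(1-\mathrm{Re}\,\phi)-|1-\phi|^2$, your estimate only suffices where $|1-\phi_n(t)|^2$ is negligible, i.e.\ for $|t|\le\veps$ small.

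That is exactly why the paper expands $1-|\phi|^2=(1-\EE\cos)(1+\EE\cos)-(\EE\sin)^2$ in \eqref{inb}. It then proves $|\EE\sin(\tau\wh X_n/n)|\le C(|\tau|/n)^\alpha$, so that $(\EE\sin)^2\le C\veps^\alpha(|\tau|/n)^\alpha$ on $|\tau|\le\veps n$. The range $\veps n<|\tau|\le\pi n$ is treated separately by a lattice bound.

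Within your own framework the cleanest repair is to symmetrize. Write $|\phi_n(t)|^2=\EE\cos\big(t(\wh X-\wh X')\big)$ for an independent copy $\wh X'$. Then $\PP(\wh X-\wh X'=d)\ge p_{1n}p_{1+d,n}\ge c(1+d)^{-1-\alpha}$ for $1\le d<n$, uniformly in $n$, because $e^{-|\nu|\sup|f|}\le e^{\nu f}\le e^{|\nu|\sup|f|}$. Your real-part computation applied to this symmetric law gives $1-|\phi_n(t)|^2\ge c|t|^\alpha$ for $2/n\le|t|\le\pi$. Hence $|\phi_S(\tau)|\le e^{-c'x|\tau|^\alpha}$ on the whole range $A<|\tau|\le\pi n$ in one stroke. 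This also gives the uniformity in $n$ that ``aperiodicity since $p_{1n}>0$'' does not supply by itself.

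A smaller point concerns the last factor. Stirling's formula together with your $\nu=0$ local limit gives
$n^\alpha\PP(K_n=\lfloor xn^\alpha\rfloor)\to\Gamma(\theta+1)x^{\theta/\alpha-1}e^{-x/\Gamma(1-\alpha)}f_{Y_x^{(0)}}(1)/(\alpha\Gamma(\theta/\alpha+1))$. This is not yet the stated form. Converting it requires the identity $g_\alpha(x)=e^{-x/\Gamma(1-\alpha)}f_{Y_x^{(0)}}(1)/(\alpha x)$, which the paper imports from Theorem 3 of Maller and Shemehsavar (2025). Alternatively, you can simply quote Pitman's local limit theorem \eqref{pn0} for this factor, as your first sentence suggests. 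Either way, the Stirling computation alone does not ``match the stated constant''.
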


  This  result shows that the   limit in \eqref{ST} exists finitely and is positive.
  When $\nu=0$ it reduces to 
  \be\label{pn0}
\lim_{n\to\infty}
n^\alpha
\PP\big(K_n(\alpha,\theta) =\lfloor x n^\alpha\rfloor \big)
= \frac{\alpha x^{\theta/\alpha} \Gamma(\theta)}{\Gamma(\theta/\alpha)}\times 
g_\alpha(x),\ x>0,
\ee
which  is the local limit theorem in 
\cite{Pitman1999}, p.21, and 
\cite{{Pitman2006}}, p.72, so Theorem \ref{2para0} generalizes that result. 
  By choosing the function $f$ appropriately in Theorem \ref{2para0} we can get concrete applications   in particular cases,
  for example in the following corollaries and theorem.

Limiting conditional distributions are obtained by dividing out 
$n^\alpha\PP\big(K_n(\alpha,\theta) =\lfloor x n^\alpha\rfloor \big)$
in \eqref{ST}.
The next corollary specializes to the case $f(y)=y^p$, $y>0$,  with $p>\alpha$. 

 \begin{corollary}\label{3parc}
  We have, for $ x>0$, $\nu\in\R$,   $p>\alpha$, the limit
  \be\label{STA}
\lim_{n\to\infty}
   \EE  \Big( \exp\Big(\nu  n^{-p}
\sum_{j=1}^n j^p M_{jn}(\alpha,\theta)\Big)\Big|
K_n(\alpha,\theta) =\lfloor x n^\alpha\rfloor \Big)
=
\frac{f_{Y_x^{(\nu,p)}}(1)}{f_{Y_x^{(0)}}(1)} 
\EE(e^{\nu T_x(\alpha,p)}),\
 \ee
 where 
 $f_{Y_x^{(\nu, p)}}(1)$ and $f_{Y_x^{(0)}}(1)$ are positive and given by \eqref{f1} (with $f(y)=y^p$, and  $\nu=0$ in the second case),  and 
      \be\label{mgd5}
 \EE(e^{\nu T_x(\alpha,p)})=
   \exp\Big(\frac{\alpha x }{p\Gamma(1-\alpha)}
   \int_0^1  \big(
e ^{\nu y}  -1\big) y^{-\alpha/p-1}\rmd y\Big).
  \ee
  \end{corollary}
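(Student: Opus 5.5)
Corollary \ref{3parc} follows from Theorem \ref{2para0} with $f(y)=y^p$: divide \eqref{ST} by \eqref{pn0} and rewrite the exponential factor as the mgf of $T_x(\alpha,p)$.

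\emph{Applicability.} With $f(y)=y^p$ we have $\bff_n^T\bfM_n(\alpha,\theta)=n^{-p}\sum_{j=1}^n j^pM_{jn}(\alpha,\theta)$. The function $f$ is continuous, monotone (hence of bounded variation) and satisfies $f(0)=0$. Condition \eqref{ig} becomes $\int_0^1 y^{p-\alpha-1}\rmd y=1/(p-\alpha)<\infty$, which holds exactly because $p>\alpha$. So Theorem \ref{2para0} applies and gives the limit of $n^\alpha\EE(\exp(\nu n^{-p}\sum_j j^pM_{jn});K_n=\lfloor xn^\alpha\rfloor)$.

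\emph{Conditioning.} The conditional expectation in \eqref{STA} equals that joint quantity divided by $n^\alpha\PP(K_n(\alpha,\theta)=\lfloor xn^\alpha\rfloor)$. By \eqref{pn0} this denominator converges to $\alpha x^{\theta/\alpha}\Gamma(\theta)g_\alpha(x)/\Gamma(\theta/\alpha)$, which is strictly positive for $x>0$ because the Mittag-Leffler density is positive on $(0,\infty)$. The ratio of limits therefore equals the limit of the ratio. The constant $\alpha x^{\theta/\alpha}\Gamma(\theta)g_\alpha(x)/\Gamma(\theta/\alpha)$ cancels, leaving
\[
\exp\Big(\frac{x\alpha}{\Gamma(1-\alpha)}\int_0^1(e^{\nu y^p}-1)y^{-\alpha-1}\rmd y\Big)\,\frac{f_{Y_x^{(\nu,p)}}(1)}{f_{Y_x^{(0)}}(1)}.
\]
Positivity of $f_{Y_x^{(\nu,p)}}(1)$ and $f_{Y_x^{(0)}}(1)$ is part of the conclusion of Theorem \ref{2para0}.

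\emph{Identifying the exponential factor.} Substitute $z=y^p$, so $y=z^{1/p}$ and $\rmd y=p^{-1}z^{1/p-1}\rmd z$. Then
\[
y^{-\alpha-1}\rmd y=p^{-1}z^{-(\alpha+1)/p+1/p-1}\rmd z=p^{-1}z^{-\alpha/p-1}\rmd z,
\]
and the exponent becomes $\frac{\alpha x}{p\Gamma(1-\alpha)}\int_0^1(e^{\nu z}-1)z^{-\alpha/p-1}\rmd z$, which is the right-hand side of \eqref{mgd5}.

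It remains to check that this is the mgf of a genuine random variable $T_x(\alpha,p)$. The measure $\Pi(\rmd z)=\frac{\alpha x}{p\Gamma(1-\alpha)}z^{-\alpha/p-1}\mathbf 1_{(0,1]}(z)\rmd z$ is a L\'evy measure on $(0,1]$. Since $0<\alpha/p<1$, it satisfies $\int_0^1 z\,\Pi(\rmd z)<\infty$. Hence there is a driftless infinitely divisible (compound-Poisson-limit, subordinator-type) random variable with bounded jumps whose Laplace exponent is $\int(e^{\nu z}-1)\Pi(\rmd z)$. Because the jumps are bounded by 1, this mgf is finite for all real $\nu$. Define $T_x(\alpha,p)$ to be this random variable; then \eqref{mgd5} holds and \eqref{STA} follows.

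There is no real obstacle here, since the whole argument is a direct specialization of Theorem \ref{2para0}. The only points needing care are verifying \eqref{ig}, which is where $p>\alpha$ enters, and confirming that the exponential is the mgf of a bona fide random variable.
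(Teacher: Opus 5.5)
Your proposal is correct and follows essentially the same route as the paper, which obtains \eqref{STA} by specializing the conditional limit \eqref{ST2} (that is, \eqref{ST} divided by \eqref{pn0}) to $f(y)=y^p$, with $p>\alpha$ needed for \eqref{ig}. Your substitution $z=y^p$ and your check that \eqref{mgd5} is the mgf of a driftless variable with L\'evy density proportional to $z^{-\alpha/p-1}$ on $(0,1]$ make explicit two steps the paper leaves implicit.
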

  Notice that the righthand side of \eqref{STA}  does not depend on $\theta$ and 
the  righthand side of \eqref{mgd5} is the mgf of a stable  subordinator    but with L\'evy measure restricted to $(0,1]$.  
    The particular forms of the  righthand sides  of \eqref{ST} and \eqref{STA} 
  derive from the general representation in Theorem \ref{thm2N}.  
  We can evaluate the expressions
  in special cases, as in the next two results.

  First set  $f(y)={\bf 1}_{\lambda\le y\le \mu}$, where
  $0<\lambda\le \mu\le 1$.
Then  we can deduce   for $S_n^{\lambda,\mu}(\alpha,\theta)  :=
\sum _{j=\lf \lambda n\rf}^{\lf \mu n\rf} M_{jn}(\alpha,\theta) $
the following corollary.
Let $G(\lambda,\mu)$ be an rv with density 
$y^{-\alpha-1}{\bf 1}_{\lambda<y\le \mu}/ \int_\lambda^\mu
 y^{-\alpha-1}\rmd y$, 
$H_\ell(\lambda,\mu)$ the sum of $\ell$ iid copies of 
$G(\lambda,\mu)$,  $\ell\ge 1$, $H_0(\lambda,\mu)=0$ 
and 
$c(\lambda,\mu)= (\lambda^{-\alpha} -\mu^{-\alpha})/\Gamma(1-\alpha)$.
 \begin{corollary}\label{3para}   
We have, for   $0<\lambda\le \mu\le 1$ and  $\ell=0,1,\ldots$, the limit
  \bea\label{f25}
 &&
      \lim_{n\to\infty}
    \PP\big(S_n^{\lambda,\mu}(\alpha,\theta) =\ell\big|K_n(\alpha,\theta) = \lf xn^\alpha\rf\big)\cr
    &&=
  \frac{  (xc(\lambda,\mu))^\ell e^{-xc(\lambda,\mu)}}
{ \ell! f_{Y_x^{(0)}}(1)}
 \times \frac{1}{2\pi }
 \int _{-\infty}^\infty e^{-\rmi \tau}
 \EE(e^{\rmi \tau Y_x^{(0)}})
 \EE(e^{\rmi \tau \calI(x,\lambda)})
  \EE(e^{\rmi \tau \calJ(x,\mu)})
\EE(e^{\rmi \tau H_\ell(\lambda,\mu)}) \rmd \tau.
  \eea
  The  righthand side of \eqref{f25} defines a proper probability mass function (adds to 1 over $\ell\ge 0$).
  \end{corollary}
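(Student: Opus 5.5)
Apply Theorem \ref{2para0} with $f(y)={\bf 1}_{\lambda\le y\le \mu}$. This $f$ is bounded, of bounded variation, vanishes near $0$, and satisfies \eqref{ig}. Moreover $\bff_n^T\bfM_n(\alpha,\theta)$ equals $S_n^{\lambda,\mu}(\alpha,\theta)$ up to the boundary index $j=\lf\lambda n\rf$ (a single term whose contribution vanishes in the limit, or is matched by adjusting the endpoint convention). Dividing \eqref{ST} by \eqref{pn0} gives, for every $\nu\in\R$, the limit of the conditional mgf
\[
\EE\big(e^{\nu S_n^{\lambda,\mu}}\,\big|\,K_n=\lf xn^\alpha\rf\big)\to \exp\big(xc(\lambda,\mu)(e^\nu-1)\big)\,\frac{f_{Y_x^{(\nu,f)}}(1)}{f_{Y_x^{(0)}}(1)},
\]
since $\frac{\alpha}{\Gamma(1-\alpha)}\int_\lambda^\mu y^{-\alpha-1}\rmd y=c(\lambda,\mu)$. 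Write $z=e^\nu$. Because $S_n$ is integer valued, the left side is a probability generating function $\sum_\ell z^\ell p_n(\ell)$. The plan is to expand the right side as a power series in $z$ and identify its coefficients with \eqref{f25}.

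For the expansion, split the inner integral in \eqref{f1} as $\int_0^1=\int_0^\lambda+\int_\lambda^\mu+\int_\mu^1$. The pieces over $(0,\lambda)$ and $(\mu,1]$ are unaffected by $\nu$. Writing $\int_\lambda^\mu(e^{\rmi\tau y}-1)z\,y^{-\alpha-1}\rmd y=z\int_\lambda^\mu(e^{\rmi\tau y}-1)y^{-\alpha-1}\rmd y$ and adding and subtracting the $\nu=0$ version, the exponent becomes
\[
\Psi_0(\tau)+(z-1)\,xc(\lambda,\mu)\big(\EE e^{\rmi\tau G(\lambda,\mu)}-1\big),
\]
where $\Psi_0$ is the exponent for $Y_x^{(0)}$. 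Hence the characteristic function appearing in $f_{Y_x^{(\nu,f)}}(1)$ is
\[
\EE(e^{\rmi\tau Y_x^{(0)}})\,e^{-(z-1)xc}\,\exp\big(zxc\,\EE e^{\rmi\tau G}\big).
\]
Expanding the last exponential gives $\sum_\ell (zxc)^\ell\,\EE(e^{\rmi\tau H_\ell})/\ell!$. Combined with the prefactor $e^{xc(z-1)}$, this produces $\sum_\ell z^\ell e^{-xc}(xc)^\ell/\ell!$ times an integrand involving $\EE(e^{\rmi\tau H_\ell})$. The remaining factors, $\EE(e^{\rmi\tau Y_x^{(0)}})e^{xc(1-\EE e^{\rmi\tau G})}$, I would rewrite so as to match the paper's $\calI(x,\lambda)$ and $\calJ(x,\mu)$. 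Presumably these are the compound-Poisson / infinitely divisible pieces of $Y_x^{(0)}$ built from the Lévy measure on $(0,\lambda)$ and on $(\mu,1]$, with $Y_x^{(0)}$ in \eqref{f25} read as the matching remainder. The aim is a bookkeeping identity of characteristic functions, checked by comparing Lévy exponents.

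The next step is interchanging the sum over $\ell$ with the $\tau$-integral, and this is the main obstacle. The $\ell$-th term's $\tau$-integrand is bounded by $(zxc)^\ell/\ell!$ times $|\EE e^{\rmi\tau Y_x^{(0)}}\cdots|$. To make it integrable I would use absolute integrability of the characteristic function coming from the small-jump part on $(0,\lambda)$: the stable-like part has $|\phi(\tau)|\le e^{-C|\tau|^\alpha}$. This part does not depend on $\ell$ or $z$, so Fubini applies for $z$ in a neighbourhood of $[0,\infty)$. The right side is therefore an entire power series in $z$ whose coefficients are exactly the expressions in \eqref{f25}.

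Finally, convergence of pgfs $\sum_\ell p_n(\ell)z^\ell$ for all $z>0$ to a function analytic near $z=1$ implies that the $p_n$ converge pointwise to the coefficients of the limit, by the continuity theorem for pgfs or the method of moments for nonnegative integer variables. The limit at $z=1$ equals $1$, so the coefficients sum to $1$. This gives a proper pmf; nonnegativity follows from being a limit of probabilities.
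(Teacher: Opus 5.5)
Your proposal is correct and follows the paper's argument: substitute $f={\bf 1}_{[\lambda,\mu]}$ into \eqref{ST2}, split off the $(\lambda,\mu)$ part of the L\'evy exponent, expand $\exp\big(g(x,\rmi\tau,\lambda,\mu)(e^\nu-1)\big)$ as a Poisson series in $e^\nu$ and read off coefficients. It improves on the paper in two respects: your dominating bound $e^{-C|\tau|^\alpha}$ from the $(0,\lambda)$ part, together with the continuity theorem for pgfs, justifies the sum--integral interchange more cleanly than the paper's Feller Laplace-inversion step, which is only validated after the fact via \eqref{to1}; and the boundary term at $j=\lf\lambda n\rf$ that you flag (the paper simply identifies $\bff_n^T\bfM_n$ with $S_n^{\lambda,\mu}$) can be removed by rerunning Lemmas \ref{ylem} and \ref{vcon} with $u_{jn}={\bf 1}_{\{\lf\lambda n\rf\le j\le\lf\mu n\rf\}}$, since their proofs only use boundedness, $u_{jn}\to0$ for fixed $j$, and $u_{\lc ny\rc n}\to f(y)$ a.e.

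For the bookkeeping, your guess about $\calI$ and $\calJ$ is not the paper's convention: $Y_x^{(0)}$ is the full variable of \eqref{f13}, while \eqref{defI}--\eqref{defJ} give $\calI(x,\lambda)$ the L\'evy measure $\frac{\alpha x}{\Gamma(1-\alpha)}y^{-\alpha-1}\rmd y$ on $(0,\lambda)$ and $\calJ(x,\mu)$ minus that measure on $(0,\mu)$. Hence $\EE(e^{\rmi\tau\calI(x,\lambda)})\EE(e^{\rmi\tau\calJ(x,\mu)})=e^{xc(\lambda,\mu)(1-\EE e^{\rmi\tau G(\lambda,\mu)})}$ is exactly your ``remaining factor''. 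Your intermediate display drops a factor $e^{-xc\,\EE e^{\rmi\tau G}}$, which you implicitly restore at that point.
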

  \noindent 
  Corollary \ref{3para} is proved in Section \ref{proofs} by inverting the mgf in \eqref{ST} for the nominated $f$.
 The integral in \eqref{f25}  
  has the value at 1
 of the density of the sum of independent copies of rvs
 $Y_x^{(0)}$, 
 $\calI(x,\lambda)$, $ \calJ(x,\mu)$, and 
 $H_\ell(\lambda,\mu)$.
  Again  the  righthand side of \eqref{f25}  does not depend on $\theta$. 
 The  characteristic functions  (cfs) of the $Y$, $\calI$ and $\calJ$  rvs are specified 
 in \eqref{f13},  \eqref{defI} and \eqref{defJ}.
   
 When $\mu=1$, \eqref{f25} gives the limiting conditional distribution of 
 $S_n^\lambda(\alpha,\theta)=\sum _{j=\lf \lambda n\rf}^{n} M_{jn}(\alpha,\theta)$ for 
 $0<\lambda<1$, 
 and  we  get our second main result --
 the marginal limiting distribution in \eqref{f28} --
  by integrating. 
For this, let  $G(\lambda)$ 
have density that of $G(\lambda,1)$, 
 let 
  $ c(\lambda) =(\lambda^{-\alpha} -1)/\Gamma(1-\alpha)$,
  and let  $ H_\ell(\lambda)$
 be the sum of  $\ell$ iid copies of  $G(\lambda)$, $0<\lambda<1$, $\ell\ge 1$,
 $ H_0(\lambda)=0$.
  Let 
 $(Y_x(\alpha, \lambda))_{x\ge 0}$ be a subordinator with
     L\'evy density $\alpha y^{-\alpha-1} {\bf 1}_{\{0<y\le \lambda\}}$,  independent of $H_\ell(\lambda)$, 
     and   let $f_{Y_x(\alpha,\lambda)+ H_\ell(\lambda)}(1)$ be
     the density function     of $Y_x(\alpha,\lambda)+ H_\ell(\lambda)$
      evaluated at 1.
The next result shows that  $S_n^\lambda(\alpha,\theta)$
(and similarly   $S_n^{\lambda,\mu}(\alpha,\theta)$, in Corollary \ref{3para}) 
have  limit distributions without centering or norming.

  \begin{theorem} \label{4para}
We have, for   $0<\lambda < 1$ and $\ell=0,1,\ldots$, the limit 
   \be\label{f28}
  \lim_{n\to\infty} \PP\big(S_n^\lambda(\alpha,\theta) =\ell\big)
= \frac{ 
(c(\lambda))^\ell\Gamma(\theta)
(\Gamma(1-\alpha))^{\theta/\alpha+\ell}
}
{\Gamma(\theta/\alpha) \ell!}
\int_0^\infty x^{\theta/\alpha+\ell-1}e^{-x\lambda^{-\alpha}}f_{Y_x(\alpha,\lambda)+ H_\ell(\lambda)}(1)\rmd x.
 \ee
  The right-hand side of \eqref{f28} 
defines a proper probability mass function.
    \end{theorem}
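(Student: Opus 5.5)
The plan is to get the marginal law of $S_n^\lambda(\alpha,\theta)$ by integrating the conditional limit of Corollary \ref{3para} (with $\mu=1$) against the local limit \eqref{pn0} for $K_n(\alpha,\theta)$. The first step is an exact identity. Since $K_n\ge 1$ and $x\mapsto \lf xn^\alpha\rf$ is constant on each interval $[k/n^\alpha,(k+1)/n^\alpha)$, we have
\[
\PP\big(S_n^\lambda(\alpha,\theta)=\ell\big)=\int_0^\infty h_n(x,\ell)\,\rmd x,
\qquad
h_n(x,\ell):=n^\alpha\,\PP\big(S_n^\lambda(\alpha,\theta)=\ell,\ K_n(\alpha,\theta)=\lf xn^\alpha\rf\big).
\]
For fixed $x>0$ we write $h_n(x,\ell)$ as the conditional probability in \eqref{f25} times $n^\alpha\PP(K_n=\lf xn^\alpha\rf)$. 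Corollary \ref{3para} and \eqref{pn0} then give the pointwise limit $h(x,\ell)=p_x(\ell)\,\phi(x)$. Here $p_x(\cdot)$ is the right-hand side of \eqref{f25} with $\mu=1$, and $\phi(x)=\alpha x^{\theta/\alpha}\Gamma(\theta)g_\alpha(x)/\Gamma(\theta/\alpha)$.

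The second step justifies passing to the limit under the integral without a domination bound. By Fatou, $\liminf_n\PP(S_n^\lambda=\ell)\ge \int_0^\infty h(x,\ell)\,\rmd x$ for each $\ell$. Corollary \ref{3para} says $p_x$ is a proper pmf. Also $\phi$ is a probability density, being the density of the almost sure limit of $K_n/n^\alpha$; equivalently, Fatou applied to \eqref{pn0} gives $\int\phi\le 1$, and equality follows from that known limit law. Hence $\sum_\ell\int_0^\infty h(x,\ell)\,\rmd x=1$. Since $\sum_\ell\PP(S_n^\lambda=\ell)=1$ for every $n$, the standard argument (liminf bounds for each $\ell$, with both sides summing to 1) forces $\lim_n\PP(S_n^\lambda=\ell)=\int_0^\infty h(x,\ell)\,\rmd x$ for each $\ell$. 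The same computation shows that the limit is a proper pmf. This settles both the existence of the limit and the last sentence of Theorem \ref{4para}.

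The remaining and main step is to identify $\int_0^\infty p_x(\ell)\phi(x)\,\rmd x$ with the explicit right-hand side of \eqref{f28}. Two simplifications are needed.
\begin{enumerate}
\item[(i)] With $\mu=1$, the variable $\calJ(x,1)$ should be degenerate at 0. The remaining variables $Y_x^{(0)}$ and $\calI(x,\lambda)$ should then combine, through their cfs \eqref{f13} and \eqref{defI}, into a single subordinator. This subordinator should have L\'evy density $\alpha x\, y^{-\alpha-1}{\bf 1}_{\{0<y\le\lambda\}}/\Gamma(1-\alpha)$, that is, it should be $Y_{x/\Gamma(1-\alpha)}(\alpha,\lambda)$. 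I will check this directly from the exponents, since these cfs are only specified later in the paper.
\item[(ii)] The factor $1/f_{Y_x^{(0)}}(1)$ in $p_x(\ell)$ must cancel against $\phi(x)$. For this I will establish the identity
\[
\frac{\alpha\,\Gamma(\theta)\,x^{\theta/\alpha}g_\alpha(x)}{\Gamma(\theta/\alpha)}
=\frac{\Gamma(\theta)}{\Gamma(\theta/\alpha)}\,x^{\theta/\alpha-1}e^{-x/\Gamma(1-\alpha)}f_{Y_x^{(0)}}(1),
\]
possibly up to the $\Gamma(1-\alpha)$ scaling. I would obtain it by rerunning the proof of Theorem \ref{2para0} with $\nu=0$. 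There $\PP(K_n=k)$ equals $C_{nk}q_{+n}^k/n!$ times $\PP(\sum_{i\le k}\wh X^{(0)}_{in}=n)$ (via \eqref{M4.2N1}), with $\Gamma(\theta/\alpha+k)/\Gamma(\theta/\alpha+1)\sim k^{\theta/\alpha-1}$. Moreover $q_{+n}^k=(1-\sum_{j>n}q_j)^k\to e^{-x/\Gamma(1-\alpha)}$ for $k\approx xn^\alpha$, and the local limit of $\sum_{i\le k}\wh X^{(0)}_{in}/n$ produces $f_{Y_x^{(0)}}(1)$. Comparing with \eqref{pn0} yields the identity.
\end{enumerate}

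After these two simplifications, the integrand becomes a constant times $x^{\theta/\alpha+\ell-1}e^{-x\lambda^{-\alpha}/\Gamma(1-\alpha)}f_{Y_{x/\Gamma(1-\alpha)}(\alpha,\lambda)+H_\ell(\lambda)}(1)$. The exponentials combine as $e^{-xc(\lambda)}e^{-x/\Gamma(1-\alpha)}=e^{-x\lambda^{-\alpha}/\Gamma(1-\alpha)}$. The substitution $x\mapsto x\Gamma(1-\alpha)$ then produces the factor $(\Gamma(1-\alpha))^{\theta/\alpha+\ell}$ and the form \eqref{f28}. I expect the bookkeeping in (ii), matching the constants and the $\Gamma(1-\alpha)$ scaling exactly, to be the main obstacle; the limit interchange is routine given properness.
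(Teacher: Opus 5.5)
Your plan is sound. For the two delicate points, passing the limit inside $\int_0^\infty\rmd x$ and properness, it takes a different route from the paper.

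The paper multiplies \eqref{f24} (with $\mu=1$) by \eqref{3b}, cancels $f_{Y_x^{(0)}}(1)$ and combines characteristic functions as in \eqref{28}. It then simply integrates the pointwise limit over $x$, leaving the interchange implicit. Properness is proved separately: the paper rewrites the limit as the complex integral \eqref{51}, sums over $\ell$ with \eqref{zid} using $1-z=h(\alpha,\tau,1)/h(\alpha,\tau,\lambda)$, and recognises \eqref{15b}, which comes from Lemma \ref{blem}.

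Your argument is Fatou plus conservation of mass. It uses the properness in Corollary \ref{3para} and $\int_0^\infty\phi(x)\,\rmd x=1$, which is exactly Lemma \ref{blem}, or equivalently the fact that $\phi$ is the generalized Mittag-Leffler density. This gives existence of the limit, the interchange and properness in one step. It also avoids the swap of $x$- and $\tau$-integrals behind \eqref{51} and \eqref{15b}. The $\ell=0$ integrands there are of order $|\tau|^{-\theta}$, so they are not absolutely integrable unless $\theta>1$. In exchange, the paper's route yields the alternative representation \eqref{51} and the link with Kevei and Mason.

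Your step (ii) is exactly \eqref{3}--\eqref{3b}, and the identity holds as you wrote it, with no extra $\Gamma(1-\alpha)$. Two slips to fix there: \eqref{M4.2N1} has $C_{nk}q_{+n}^k/k!$, not $/n!$. The asymptotics you need are $\Gamma(\theta/\alpha+k)/k!\sim k^{\theta/\alpha-1}$ and $n!/\Gamma(n+\theta)\sim n^{1-\theta}$.

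One claim in (i) is wrong and must be corrected when you do the direct check. With the paper's definition \eqref{defJ}, $\calJ(x,1)$ is not degenerate. Its formal characteristic function is $\exp\big(-\frac{\alpha x}{\Gamma(1-\alpha)}\int_0^1(e^{\rmi\tau y}-1)y^{-\alpha-1}\rmd y\big)=1/\EE(e^{\rmi\tau Y_x^{(0)}})$, so it carries a negative L\'evy measure. If it were trivial, $Y_x^{(0)}+\calI(x,\lambda)$ would have L\'evy density $\frac{\alpha x}{\Gamma(1-\alpha)}y^{-\alpha-1}({\bf 1}_{(0,1]}+{\bf 1}_{(0,\lambda]})$, which is not what you want. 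In fact $Y_x^{(0)}$ and $\calJ(x,1)$ cancel, and $\calI(x,\lambda)$ alone is $Y_{x/\Gamma(1-\alpha)}(\alpha,\lambda)$. Equivalently, as in \eqref{28},
\[
\EE(e^{\rmi\tau Y_x^{(0)}})\,e^{-g(x,\rmi\tau,\lambda,1)}=e^{-xc(\lambda)}\exp\Big(\frac{\alpha x}{\Gamma(1-\alpha)}\int_0^\lambda(e^{\rmi\tau y}-1)y^{-\alpha-1}\rmd y\Big).
\]
The rest of your assembly is correct: the target subordinator, the combination $e^{-xc(\lambda)}e^{-x/\Gamma(1-\alpha)}=e^{-x\lambda^{-\alpha}/\Gamma(1-\alpha)}$, and the final substitution $x\mapsto\Gamma(1-\alpha)x$ producing $(\Gamma(1-\alpha))^{\theta/\alpha+\ell}$. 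Since the characteristic function of $\calI(x,\lambda)$ is integrable (cf.\ \eqref{a2}), the $\tau$-integral in \eqref{f25} is then the continuous density $f_{\calI(x,\lambda)+H_\ell(\lambda)}(1)$, as you need.
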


     There is an interesting connection of 
\eqref{f28} with the results of  \cite{ERGR2008}.   
Their paper concerns limit ``shapes'' for random structures on the set of partitions; that is, to do with the convergence of
 $S_n^\lambda(\alpha,\theta) $ (in our notation). See their Section 2  for a  sketch of the   historical development.
 They use a similar setup to our \eqref{N1}
     (Section 3 of  \cite{ERGR2008}) and   
 show  that measures of the type we are considering, in  what are termed the ``logarithmic'' and 
``convergent'' cases,  do not have limit shapes  (Corollary 3.1 of \cite{ERGR2008}), whereas for the ``expansive'' case, normed and centered versions of increments of $S_n^\lambda(\alpha,\theta) $ have limiting normal distributions
     (Theorem 4.1 of \cite{ERGR2008}).
     See also \cite{EG2024}, \cite{NikZ2013} and 
     \cite{Stufler2024}
     for related results.
Our results on  the  two-parameter  sampling formula are in the 
``convergent'' category and produce a quite different limiting regime.
We note also that the form of $S_n^{\lambda}(\alpha,\theta)$ suggests the possibility of a functional limit theorem. By choice of $\bff$ in \eqref{ST} we can obtain 
 finite-dimensional versions of  Corollary \ref{3para} and Theorem \ref{4para}, 
but we do not pursue this further here.

As another application of Theorem \ref{thm2N}, we give in the Appendix a quick derivation of Theorem 3 of  \cite{MallerShemehsavar2025}: 
the conditional asymptotic normality of a finite collection,  $(M_{jn}/K_n)_{1\le j\le J}$, after centering and an unusual $n^{\alpha/2}$ norming.

 \bigskip \noindent {\bf Remarks.}\  (i)\ 
Modifications of 
Condition \eqref{N1}  are appropriate for other similar models.
For example, \cite{Mohle2006a} introduces a model for coalescent
 processes with simultaneous multiple collisions.
See also  \cite{Mohle2006b}.
Similarly we can analyse the recursion in \cite{GIMS}.
Another important model is the  Poisson-Kingman $\PK^{(r)}(\rho)$ model. 
Given a Poisson point process with random intensity measure $\Gamma_r\rho$, where $r>0$,
$\Gamma_r$ is a Gamma rv and the 
 intensity  density $\rho(s)$ satisfies
\be\label{cond0}
\lim_{s \dto 0}\rho(s) = \infty, \quad 
\rho(s) <\infty \text{ for all } s> 0, \quad 
\text{and } 
\int_{0}^\infty(s\wedge 1) \rho(s) \rmd s<\infty,
\ee
 \cite{IpsenMaller2017} and  \cite{GIMS} 
 show how to construct the   $\PK^{(r)}(\rho)$ sampling model which generalises  the model $\PD_\alpha^{(r)}$
 of \cite{{IpsenMallerShemehsavar2020a}}
and a number of other models.
A formula for the distribution of the frequency spectrum  is
\be\label{dM}
P\big(\bfM_n=\bfm, K_n=k\big)
= n\int_{v>0}
\frac{r^{[k]}v^{n-1}}{\psi(v)^{r+k}}
	 \prod_{j=1}^{n}
	 \frac{1}{m_j!}
	\Big( \frac{\pi_{j}(v)}	 {j!}\Big)^{m_j} \rmd v,\ k\in \N_n,\
	\bfm\in A_{kn},
\ee 	
where $\psi(v)=1+\int_{0}^\infty \rho(x)(1- e^{-v x})\rmd x$ 
and $\pi_j(v)=(-1)^{j-1}\psi^{(j)}(v)$,
$v>0$.
In \cite{CZ2023} the  $\PD_\alpha^{(r)}$ model is derived in a point process context and applied to data on the relative velocities of galaxies. Simulations in \cite{CZ2023} show the utility of including values of $r>0$ in analyses.
 We note in \eqref{dM} under the integral sign a term analogous to the  righthand side of \eqref{N1} which suggests that the methods herein can be adapted to the  $\PK^{(r)}(\rho)$  class.
In fact  analyses along these lines have been done in \cite{IpsenMallerShemehsavar2020a} and \cite{MallerShemehsavar2025} for  $\PD_\alpha^{(r)}$.
A related formula is in \cite{Pitman2006}, p.81.
See also \cite{Mano2017} for the distributions of extreme sizes in Gibbs-type partitions.

(ii)\ 
    Many tests useful in genealogy   can be written as functions, often as  linear combinations, of the corresponding component spectrum; for example,  the Ewens–Watterson test statistic for homozygosity
\cite{Watterson1977}
 in our notation is 
$ \sum_{j=1}^n (j/n)^2 M_{jn}$.
 This is the expression in \eqref{STA} with $p=2$, so these results have direct application in genetics. 
We refer to \cite{MallerShemehsavar2025} and  \cite{ABT2003} 
 for  background, including discussion of the importance of the frequency spectrum 
 in population genetics.
See also \cite{YS2000},  \cite{Pitman2006}, \cite{GHP2007}, 
 \cite{HJL2021}, \cite{HJL2025},  \cite{James2008} 
 and \cite{James2025} 
for further discussion and references.
We mention that the ``large sample" in   \cite{James2008}  
refers to the behaviour of the model as  $\theta+n\alpha\to \infty$; 
see also his references.
 
 \section{Proofs}\label{proofs}\
\noindent{\bf  Proof of Theorem \ref{thm2N}:}\
 This proof  is carried out in two stages.
 In the first subsection we consider a fixed number $J$ of the $M_{jn}$.
   The general formula  is then
 applied in the second subsection to get the mgf in \eqref{ecenN}.
 
  \subsection{The first $J$ components of $\bfM_n$}\label{ss1}
   We proceed from \eqref{N1} to get a formula for the distribution of the first $J$ components of $\bfM_n$ by summing  over the remaining $n-J$ components. 
  Take $n\in \N$,  $k\in\N_n$, $1\le J<n$, $\bfm\in A_{kn}$, and write from \eqref{N1}
\begin{align}\label{M4.1N}
&
\PP(M_{jn}=m_j, 1\le j\le J,\, K_n=k)=
C_{nk}
\prod_{j=1}^J \frac{q_j^{m_j}}{m_j!} 
\sum_{\bfm^{(J)}\in A_{kn}^{(J)}}
\prod_{j=J+1}^n \frac{q_j^{m_j}}{m_j!},
\end{align}
 with $\bfm^{(J)}=(m_{J+1}, \ldots, m_n)$, 
\be\label{A2defN} 
A_{kn}^{(J)}=\Big\{m_j\ge 0, J< j\le n: \,
 \sum_{j=J+1}^nm_j=k^{(J)},\, 
\sum_{j=J+1}^n jm_j=n^{(J)}\Big\},
\ee
 $k^{(J)}=k-m_{+}^{(J)}$, $n^{(J)}=n-m_{++}^{(J)}$,  $m_{+}^{(J)}= \sum_{j=1}^Jm_j$ and $m_{++}^{(J)}= \sum_{j=1}^Jjm_j$.
Lemma \ref{LMu}  in the Appendix
 gives a key representation for summations such as in \eqref{M4.1N}. 
 To apply it in the present case, let
$\big(X_{in}^{(J)}\big)_{1\le i\le k^{(J)}}$ be i.i.d. integer valued rvs with distribution
\be\label{M8N}
\PP\big(X_{1n}^{(J)}=j\big) =
\frac{q_j}{\sum_{\ell=J+1}^n q_\ell}, \ J+1\le j\le n.
\ee
Then applying Lemma \ref{LMu} 
with $\mu_j=q_j$, 
 we can represent the  summation over $\bfm^{(J)}$ in \eqref{M4.1N}  
 as
\be\label{M9N}
\frac{1}{k^{(J)}!}
  \Big(\sum_{j=J+1}^n q_j \Big)^{ k^{(J)}}
\PP\Big(\sum_{i=1}^{ k^{(J)}} X_{in}^{(J)}= n^{(J)}\Big).
 \ee
 
 When $J=n$ this expression is interpreted as 1.
Recall $q_{+n}= \sum_{j=1}^n q_j$.
Let $q_{+J}= \sum_{j=1}^J q_j$ and set
\be\label{defqLN}
q_{+n}^{(J)}    
=\frac{\sum_{j=1}^J q_j}{\sum_{j=1}^n q_j}
=\frac{q_{+J}}{q_{+n}},   
\quad {\rm so\ that}\quad 
\sum_{j=J+1}^n q_j= (1- q_{+n}^{(J)})q_{+n}.
\ee
Using these, substitute  \eqref{M9N} into \eqref{M4.1N} to get
\be\label{M4.2N}
\PP\big(M_{jn}=  m_j, \,1\le j\le J,\,  K_n=k\big)  
=
\frac{C_{nk}(1-q_{+n}^{(J)})^{ k^{(J)}}}{ k^{(J)}!}
\times
\prod_{j=1}^J  \frac{q_{j}^{  m_j}}{  m_j!}
\times
(q_{+n})^{ k^{(J)}}
\PP\Big(\sum_{i=1}^{ k^{(J)}} X_{in}^{(J)}= n^{(J)}\Big),
\ee
where $n\in \N$,   $k\in \N_n$, $0<J<  n$, and 
  vectors $(    m_{1}, \ldots,   m_J):=    \wh\bfm^{(J)}$ are in
\be\label{A2defNw} 
\wh A_{kn}^{(J)}:=\Big\{   m_j\ge 0, 1< j\le J: \,
 \sum_{j=1}^J    m_j=m_{+}^{(J)},\, 
\sum_{j=1}^J j   m_j= m_{++}^{(J)}\Big\}.
\ee

 \subsection{Conditional Moment Generating Function
 of the Frequency Spectrum}\label{MGC}
Eq. \eqref{M4.2N} gives a representation for  the distribution of  the  subvector 
$ \bfM_n^{(J)}  =(M_{1n}, \ldots, M_{Jn})$ of $\bfM_n$.
  Correspondingly, let
  $M_{+n}^{(J)} =\sum_{j=1}^JM_{jn}$
  and  $M_{++n}^{(J)} =\sum_{j=1}^JjM_{jn}$.
  We include these variables  and $K_n$ 
  in defining the distribution of $\bfM_n^{(J)}$, and define 
also the event 
 \ben
 E_n(k,J)=
 E_n(k,m_+^{(J)},m_{++}^{(J)}) 
 :=
 \{K_n=k,\, M_{+n}^{(J)}=m_+^{(J)},\, M_{++n}^{(J)} =m_{++}^{(J)}\}.
 \een
  The second stage of the proof is to derive  a formula for the  mgf of  
 $\bfM_n^{(J)}/M_{+n}^{(J)}$ conditional on $E_n(k,J)$. 
 Let $\nu\in\R$,  $\bfu_J=(u_{1J},\ldots, u_{JJ}) \in \R^J$, 
and    
$q_{jJ}:=q_j/q_{+J}$.  
Then add over  $(m_{1}, \ldots,m_J)=    \wh\bfm^{(J)}$ in  \eqref{M4.2N} to get
 \bea\label{cfMJN}
&& \EE\Big(\exp\Big(\nu 
 \frac{\bfu_J^T\bfM_n^{(J)}}{ M_{+n}^{(J)}}\Big); E_n(k,J)
 \Big)
 = \sum_{ \wh\bfm^{(J)}\in \wh  A_{kn}^{(J)}} 
 \Big(\prod_{j=1}^Je^{\nu u_{jJ}   m_j/m_+^{(J)}}\Big)
 \times ({\rm the\ RHS\ of\ \eqref{M4.2N}})\cr
 &&\cr
&& =
\frac{C_{nk}(1-q_{+n}^{(J)})^{ k^{(J)}}
\big(q_{+J}\big)^{m_+^{(J)}}}{k^{(J)}!}
 \sum_{  \wh\bfm^{(J)}\in \wh  A_{kn}^{(J)}}
\prod_{j=1}^J \frac{1}{  m_j!} 
\big(q_{jJ}e^{ \nu u_{jJ}/m_+^{(J)}}  \big)^{ m_j}
\times
(q_{+n})^{ k^{(J)}}
 \PP\Big(\sum_{i=1}^{ k^{(J)}} X_{in}^{(J)}= n^{(J)}\Big),\cr&&
 \eea
 where  $k\in \N_n$,
 $\wh A_{kn}^{(J)}$
 is defined in \eqref{A2defNw}, and recall
 $q_{jJ}=q_j/q_{+J}$, 
$k^{(J)}=k-m_{+}^{(J)}$ 
and $n^{(J)}= n-m_{++}^{(J)}$. 

 We derived \eqref{cfMJN} 
 assuming $J<n$ but it remains true with $J=n$, when $m_{+}^{(n)}=k$, $m_{++}^{(n)}=n$  and $k^{(n)}=n^{(n)}=0$.
  The factors $(1-q_{+n}^{(J)})^{ k^{(J)}}$   and $ (q_{+n})^{ k^{(J)}}$ 
  and  the probabilities  involving  $X_{in}^{(n)}$    are then
 no longer present  (they are interpreted as being equal to 1).

Again apply Lemma \ref{LMu}, now with $J=0$ and $n$ set equal
to $J$, and
 $\mu_j = q_{jJ}e^{ \nu u_{jJ}/m_+^{(J)}}$,
 to 
 write the  summation over  $\wh\bfm^{(J)}$ 
 in \eqref{cfMJN} as
\be\label{ns0}
\frac{1}{m_+^{(J)}!}
 \PP\Big(\sum_{i=1}^{m_+^{(J)}} \wh X_{iJ}^{(\nu\bfu_J)}=m_{++}^{(J)}\Big)
\times
\Big(
\sum_{j=1}^J q_{j J}e^{ \nu u_{jJ}/m_+^{(J)}}  \Big)^{m_+^{(J)}},
\ee
which we can further rewrite as 
\be\label{ns}
\frac{1}{m_+^{(J)}!}
 \PP\Big(\sum_{i=1}^{m_+^{(J)}} \wh X_{iJ}^{(\nu\bfu_J)}=m_{++}^{(J)}\Big)
\times
\EE\Big(
\exp\Big(\frac{\nu} {m_+^{(J)}} \sum_{i=1}^{m_+^{(J)}} V_{iJ}^{(\bfu_J)} \Big)
\Big).
 \ee
 Here $\big(\wh X_{iJ}^{(\nu{\bfu_J}  )}\big)_{1\le i\le m_{+}^{(J)}}$ are i.i.d. with
\be\label{HM8N}
\PP\big(\wh X_{1J}^{(\nu{\bfu_J}  )}=j\big) 
=
\frac{q_{jJ}e^{ \nu u_{jJ}/m_+^{(J)}} }
{\sum_{\ell=1}^J q_{\ell J} e^{ \nu u_{\ell J}/m_+^{(J)}}},\ 1\le j\le J,
\ee
and   $(V_{iJ}^{(\bfu_J)})_{1\le i\le m_+^{(J)}}$  are 
 independent rvs,  each with distribution
 \be\label{dvND}
 P(V_{1J}^{(\bfu_J)}= u_{jJ})= q_{jJ} = \frac{q_j}{q_{+J}},\ 1\le j\le J,
 \ee
 so 
  $\sum_{i=1}^ {m_+^{(J)}}V_{iJ}^{(\bfu_J)}/m_+^{(J)}$ has mgf equal to the term $\Big(\sum_{j=1}^J q_{j J} 
  e^{ \nu u_{jJ}/m_+^{(J)}}
\Big)^{m_+^{(J)}}$
in \eqref{ns0}.
 Now \eqref{cfMJN} can be written as
  \bea\label{cN}
&& \EE\Big(
 \exp\Big(\nu 
 \frac{\bfu_J^T\bfM_n^{(J)}}{ M_{+n}^{(J)}}\Big); E_n(k,J)
 \Big) =
\frac{C_{nk}(1-q_{+n}^{(J)})^{ k^{(J)}}
\big(q_{+J}\big)^{m_+^{(J)}}}
{k^{(J)}! \, m_+^{(J)}!}
\times
 \PP\Big(\sum_{i=1}^{m_+^{(J)}} \wh X_{iJ}^{(\nu\bfu_J)}=m_{++}^{(J)}\Big)  \cr
  &&\hskip2cm
\times
(q_{+n})^{k^{(J)}}
 \PP\Big(\sum_{i=1}^{k^{(J)}} X_{in}^{(J)}= n^{(J)}\Big)
 \times
 \EE\Big(
\exp\Big(\frac{\nu} {m_+^{(J)}} \sum_{i=1}^{m_+^{(J)}} V_{iJ}^{(\bfu_J)}  \Big)
\Big).\qquad
 \eea
  \eqref{cN}  is a very general formula for the joint mgf.
 Setting $\nu=0$ in it  we obtain
   \bea\label{cN0}
 &&
 \PP\big( E_n(k,J)\big)
 =
 \PP\big(
 K_n=k, M_{+n}^{(J)}=m_+^{(J)}, M_{++n}^{(J)}=m_{++}^{(J)}
 \big)\cr
&&\cr
&& =
\frac{C_{nk}(1-q_{+n}^{(J)})^{ k^{(J)}}
\big(q_{+J}\big)^{m_+^{(J)}}}{k^{(J)}!\, m_+^{(J)}!}
\times
\PP\Big(\sum_{i=1}^{m_+^{(J)}} \wh X_{iJ}^{(0)}=m_{++}^{(J)}\Big)
\times
 (q_{+n})^{ k^{(J)}}
 \PP\Big(\sum_{i=1}^{k^{(J)}} X_{in}^{(J)}= n^{(J)}\Big),
 \eea
 where $X_{in}^{(J)}$ and  $ \wh X_{iJ}^{(0)}$ have the distributions in \eqref{M8N} and 
 \eqref{HM8N} (with $\nu=0$, thus, not  depending on $\nu$ or $\bfu_J$).
 
Again \eqref{cN} and \eqref{cN0}  
 remain true with $J=n$, when $m_{+}^{(n)}=k$, $m_{++}^{(n)}=n$  and $k^{(n)}=n^{(n)}=0$.
 Then 
 \eqref{cN0} becomes
\be\label{M4.2N1}
\PP\big( K_n=k\big)=
\frac{C_{nk}q_{+n}^k}{k!}
\PP\Big(\sum_{i=1}^{k} \wh X_{in}^{(0)}=n\Big),
\ee
where  $\big(\wh X_{in}^{(0)}\big)_{1\le i\le k}$ are i.i.d. with the  distribution in  \eqref{H0}.
Setting $J=n$ in \eqref{cN} gives
  \be\label{cN2}
 \EE\Big(
 \exp\Big(\nu 
 \frac{\bfu_n^T\bfM_n^{(n)}}{ M_{+n}^{(n)}}\Big);K_n=n \Big) =
\frac{C_{nk}\big(q_{+n}\big)^k}{k!}
 \PP\Big(\sum_{i=1}^{k} \wh X_{in}^{(\nu\bfu_n)}=n\Big) 
\times
 \EE\Big(
\exp\Big(\frac{\nu} {k} \sum_{i=1}^{k} V_{in}^{(\bfu_n)}  \Big)
\Big).
 \ee
With $\bfq_n$ as in \eqref{qnvec}, multiply both sides in \eqref{cN2} by
 $e^{-\nu\bfu_n^T\bfq_n}$
  to introduce centering
  and substitute for \eqref{M4.2N1} in this modified 
  \eqref{cN2}
  to get \eqref{ecenN}.
  The probability on the LHS of \eqref{M4.2N1} is positive, so the same is true of  the probability on the RHS, and the substitution implicit in deriving  \eqref{cN2} is well defined.
  This completes the proof of Theorem \ref{thm2N}.
  \halmos

Before considering specific asymptotic analyses,
 we set out the general strategy to be followed,  still using the general setup developed so far.

\medskip\noindent{\bf Asymptotic Analysis.}\
There are various ways of dealing with the large-sample ($n\to\infty)$ properties of the models.
  One possible strategy  is as follows.
  The summations in \eqref{ecenN} are of triangular arrays of  rvs that are i.i.d. in each row.
 Under  conditions to be specified,  the limiting behaviour of linear combinations of the elements of
    $\bfM_n$, conditional on $K_n=k_n$, can be deduced from   the limiting behaviour of  the normalised sums of i.i.d. rvs  in \eqref{ecenN} with $k=k_n$ 
    chosen appropriately.

    There are two types of limits to consider; those involving the $\wh X$, and those involving the $V$.
   Each set can be considered separately.
Note that the length of the vector $\bfu_n$ grows with $n$.

    \subsection{Asymptotics for the $\wh X$ variables}\label{Xvar}
To find the limits of the probabilities on the RHS of \eqref{ecenN}  we   require  a local limit theorem for the $\wh X$ variables.
A standard method is to use  the Fourier inversion  formula 
(\cite{GK1968}, p.233)  and write
 \be\label{3.num0}
n\PP\Big(\sum_{i=1}^{k} \wh X_{in}^{(\nu\bfu_n)}=n\Big)
=
 \frac{n}{2\pi } \int_{-\pi}^{\pi} e^{-\rmi \tau n}
\big( \phi_{\wh X_n}(\tau)\big) ^{  k} \rmd \tau
=
 \frac{1}{2\pi } \int_{-n\pi}^{n\pi} e^{-\rmi \tau}
\big( \phi_{\wh X_n}(\tau/n)\big) ^{  k} \rmd \tau,\ k\in \N_n,
 \ee
where  
$\phi_{\wh X_n}(\tau) := \EE\big(\exp(\rmi\tau   \wh X_{1n}^{(\nu\bfu_n)})\big)$, $\tau\in\R$.
Suppose that $k=k_n\to \infty$ 
 is such that, as $n\to\infty$, 
$
n^{-1} \sum_{i=1}^{k_n}  \wh X_{in}^{(\nu\bfu_n)}\todr Y^{(\nu,\bfu)}
$
for an infinitely divisible rv $Y^{(\nu,\bfu)}$ with density 
$f_{Y^{(\nu,\bfu)}}(y)$, $y>0$. 
 We then have convergence of the characteristic function (cf)
  $(\phi_{\wh X_n}(\tau/n)\big) ^{  k_n} \to 
 \EE (e^{\rmi \tau Y^{(\nu,\bfu)}})$
 and  via \eqref{3.num0} we might expect 
    \be\label{3.num1}
    \lim_{n\to\infty}
n\PP\Big(\sum_{i=1}^{k_n} \wh X_{in}^{(\nu\bfu_n)}=n\Big)
=
 \frac{1}{2\pi } \int_{-\infty}^{\infty} e^{-\rmi \tau}
 \EE (e^{\rmi \tau Y^{(\nu,\bfu)}}) \rmd \tau = f_{Y^{(\nu,\bfu)}} (1),
 \ee
 where the RHS is  the value of  the  density of   $Y^{(\nu,\bfu)}$ at the point 1.
 This will give the limit of $n$ times the numerator term in \eqref{ecenN}, and setting $\nu=0$ will give the limit of $n$ times the denominator,
 with $Y^{(0)}$ being the corresponding version of   $Y^{(\nu,\bfu)}$.
The main task is to justify taking the limit through the integral in \eqref{3.num0} to get \eqref{3.num1}.
 
 Supposing \eqref{3.num1} holds with $f_{Y^{(\nu,\bfu)}}(1)$ and $f_{Y^{(0)}}(1)$ finite and positive, 
 we then get from \eqref{ecenN},  as $n\to\infty$,
   \be\label{ece}
\EE\Big(
\exp\Big(\nu\bfu_n^T\Big(\frac{\bfM_n}{K_n}-\bfq_n\Big)\Big)\Big|K_n=k_n\Big)
\sim 
\EE\Big(\exp\Big(
  \frac{ \nu }{k_n} \sum_{i=1}^{k_n}
   \big(V_{in}^{(\bfu_n)}- \EE(V_{in}^{(\bfu_n)})\big)\Big)\Big)
   \times
\frac{f_{Y^{(\nu,\bfu)}}(1)}{f_{Y^{(0)}}(1)},  
   \ee
    for each $\nu$ and $\bfu_n$.
  Continuing with the $\wh X$ variables, to find the limiting distribution of the normed sum 
  $n^{-1} \sum_{i=1}^{k_n}  \wh X_{in}^{(\nu\bfu_n)}$
we can use   classical limit theorems for sums of  a triangular array, e.g., by verifying Conditions (i), (ii), (iii), of  Corollary 15.16, p.297, of \cite{Kallenberg2002}. 
 Kallenberg's Condition (i)  is
\be\label{210}
\lim_{n\to\infty}  k_n
\PP\big(\wh X_{1n}^{(\nu\bfu_n)}>\lf hn\rf\big) = 
\mu^{(\nu,\bfu)}\{(h,1]\}, \ 0<h\le 1,
\ee
for continuity points $h$ of a L\'evy measure $ \mu^{(\nu,\bfu)}$ which will attribute positive mass only to $(0,1]$; recall  that $\PP(0<\wh X_{1n}^{(\nu\bfu_n)}\le n)=1$.
For  Kallenberg's Condition (ii) we require
  \bea\label{10a0}
  &&
\lim_{n\to\infty} k_n
\EE\Big( n^{-1}\wh X_{1n}^{(\nu\bfu_n)} {\bf 1}_{\{\wh X_{1n}^{(\nu\bfu_n)}<hn\}}\Big)
=
  \int_{0<y\le h}y\mu^{(\nu,\bfu)}(\rmd y)
  =
  b- \int_{h<y\le 1}y\mu^{(\nu,\bfu)}(\rmd y),
\eea
where $b= \int_{0<y\le 1}y\mu^{(\nu,\bfu)}(\rmd y)$,
assumed finite. Kallenberg's Condition (iii) requires
  \be\label{10b0}
\lim_{n\to\infty} k_n
\EE\Big(( n^{-1}\wh X_{1n}^{(\nu\bfu_n)})^2 {\bf 1}_{\{\wh X_{1n}^{(\nu\bfu_n)}<hn\}}\Big)
=a+ \int_{0<y\le h}y^2\mu^{(\nu,\bfu)}(\rmd y)
\ee
for  a finite constant $a\ge 0$ and all $h>0$.

With these three conditions satisfied, 
    the normed sum 
 $n^{-1} \sum_{i=1}^{k_n} \wh X_{in}^{(\nu\bfu_n)}$ 
 converges in distribution to the infinitely divisible distribution 
 with characteristic exponent
 (see  Corollaries  15.16, 15.8, p.291, of \cite{Kallenberg2002})
 \bea\label{10c0}
 &&\rmi\tau  b -\tfrac{1}{2} \tau ^2 a +\int_{\R\setminus\{0\}}
 (e^{ \rmi\tau  y} -1 -  \rmi\tau  y{\bf 1}_{\{0<y<1}) \mu^{(\nu,\bfu)}(\rmd y)=
\int_{(0,1]}
 (e^{ \rmi\tau  y} -1 )\mu^{(\nu,\bfu)}(\rmd y)
 \eea
 assuming\footnote{If $a>0$ there would be a normal component to $Y^{(\nu,\bfu)}$.  This doesn't occur in our examples.}
  $a=0$
 and recalling $b=\int_{0<y<1} y\mu^{(\nu,\bfu)}(\rmd y)$.
The limit  distribution thus has  characteristic function   
\be\label{cfY}
\EE(e^{\rmi \tau Y^{(\nu,\bfu)}})=
\exp\Big(
\int_{(0,1]}
 (e^{ \rmi\tau  y} -1 ) \mu^{(\nu,\bfu)}(\rmd y)\Big).
 \ee
 An analogous approach can be used for the $X_{in}^{(0)}$ variables.

So far we have not specified 
$k_n$.
The choice of $k=k_n$ in \eqref{ecenN} will be dictated by the marginal limiting behaviour of $K_n$.
Next we turn to  the behaviour of the $V$-variables.

 \subsection{Asymptotics for the $V$ variables}\label{Vvar}
 The expectation on the right hand side of \eqref{ecenN} is the mgf of   the centered, normalised sum of independent rvs $V_{in}^{(\bfu_n)}$  
  having the mean and variance in \eqref{EV}.
  The matrix $\bfQ_n$ in \eqref{EV} 
  is a rank $n-1$ positive semi-definite matrix with the eigenvector ${\bf 1_n}=(1,1,\ldots, 1)\in \R^n$ corresponding to the zero eigenvalue; see \cite{TanSag1992}.
We impose the condition $\bfu_n^T\bfQ_n\bfu_n>0$, which requires $\bfu_n$ not to be a multiple of ${\bf 1_n}=(1,1,\ldots,1)$.
Note that when $\bfu_n={\bf 1_n}$,
$\bfu_n^T\bfM_n= \sum_{j=1}^nM_{nj}=K_n$.
In the Appendix 
we show as a check that \eqref{ecenN} reduces correctly in this case.

In \eqref{ece} we replace $\nu$ by $\nu b_n$ to introduce a norming sequence $b_n>0$.
Having given $k_n$,  
a possible choice of  $b_n$ is 
\be\label{bdef}
b_n= \sqrt{\frac{k_n}{ {\rm Var} (V_{1n}^{(\bfu_n)})}}
= \sqrt{\frac{k_n}{ \bfu_n^T\bfQ_n\bfu_n}},
\ee
thus, $k_n =b_n^2 \bfu_n^T\bfQ_n\bfu_n$
and  $b_n/k_n =1/\sqrt{k_n\bfu_n^T\bfQ_n\bfu_n}$.
  Then
   \be\label{nsgN}
   \frac{b_n}{k_n }
 \sum_{i=1}^{k_n} \big(V_{in}^{(\bfu_n)}- \EE(V_{in}^{(\bfu_n)}\big)
 =
  \frac{1}{\sqrt{k_n   \bfu_n^T\bfQ_n\bfu_n }}
 \sum_{i=1}^{k_n} \big(V_{in}^{(\bfu_n)}-
  \EE(V_{in}^{(\bfu_n)})\big).
 \ee
  The normalisation in \eqref{nsgN} is such that the variance of the expression therein is 1.
  An alternative choice for $b_n$ is  $b_n\sim ck_n$ for a constant $c>0$, which simplifies the expressions in \eqref{nsgN}.
  Having decided on $k_n$ and $b_n$ we can apply convergence theorems for  triangular arrays  or any equivalent method
 to investigate the  asymptotic behaviour   of the mgf on the RHS of \eqref{ece}.

  To sum up so far:
   the convergence  of     $\bfu_n ^T\bfM_n$, conditional on $K_n$, can be studied via that  of  normalised sums of iid rvs.
  We will verify that this procedure can be completed for the
  Pitman-Yor model.
 Following \cite{ABT2003}, Ch. 8, p.177, we analyse the convergence of functionals like $\sum_{j=1}^n f(j/n)M_{jn}$, for
 functions as specified in Section \ref{intro}; in particular, $f$ is bounded on $[0,1]$.
 In the previous notation, 
  we consider a weighting sequence of the form 
   $\bfu_n= (u_{jn}=f(j/n))_{1\le j\le n}$
   and let $\bff_n$ denote the vector $(f(j/n))_{1\le j\le n}$.
   For the Pitman-Yor  model  the  centering term in \eqref{ecenN} is not needed
   and we delete it from both sides.

\medskip\noindent{\bf Proof of Theorem \ref{2para0}:}\
For the 2-parameter model,  
refer to \eqref{FSFN} and  \eqref{qdef} and  assume \eqref{ig}.
Apply  \eqref{ecenN} with $\bfq_n$ deleted, $\bfu_n=\bff_n$, $k=k_n\in \N_n$, and $\nu$ replaced by $\nu b_n$, $b_n>0$, to  write
  \bea\label{TS-}
  &&
n^\alpha  \EE\Big(
\exp\Big(\frac{ \nu b_n\bff_n^T\bfM_n(\alpha,\theta)}{K_n(\alpha,\theta)}\Big); K_n(\alpha,\theta)=k_n\Big)\cr
   &&
   =
   \EE\Big(\exp\Big(
  \frac{ \nu b_n}{k_n} \sum_{i=1}^{k_n}
  V_{in}^{(\bff_n)}\Big)\Big)\times
\frac{\PP\Big(\sum_{i=1}^{k_n} \wh X_{in}^{(\nu\bff_n)}
=n\big)}
{ \PP\Big(\sum_{i=1}^{k_n} \wh X_{in}^{(0)}=n\Big)}
 \times 
 n^\alpha \PP(K_n(\alpha,\theta)=k_n).
\eea
Choose 
$k_n= \lf xn^\alpha\rf$, $x>0$, and  
notice that  \eqref{nsgN} simplifies considerably if we choose 
$b_n= k_n$, 
as we do now. 
Making these substitutions, the $\wh X$ and $V$ variables in 
\eqref{HM8Nn}--\eqref{dvN} and \eqref{TS-} then have distributions 
\be\label{HM8}
\PP\big(\wh X_{1n}^{(\nu f )}=j\big) 
=
\frac{q_{j}e^{ \nu  f(j/k_n)} }
{\sum_{\ell=1}^n q_{\ell } e^{ \nu  f(\ell/k_n)}}, \ {\rm where}\
q_j = \frac{\alpha \Gamma(j-\alpha)}{j!\Gamma(1-\alpha)}, 1\le j\le n
\ee
(we now write just $f$ for $\bff_n$)
with $\wh X_{1n}^{(0 )}$ obtained as the case $\nu=0$, 
and 
 \be\label{dvN2}
 P(V_{1n}^{(f)}= f(j/n))= q_{jn} =\frac{q_j}{\sum_{\ell=1}^n q_\ell},\ 1\le j\le n.
 \ee
 
 For the asymptotic analysis, we begin with the second factor on the RHS of  \eqref{TS-},  
and  verify \eqref{3.num1} for an appropriate version of $Y^{(\nu,f)}$.
For this we need the next lemma  whose proof is deferred to the Appendix.

 \begin{lemma}\label{ylem}
For the two-parameter  Pitman-Yor model  
 we have the following limits as $n\to\infty$.
 First, 
 \be\label{6.5a}
 n^{-1} \sum_{i=1}^{\lfloor x n^\alpha\rfloor} 
 \wh X_{in}^{(\nu f)}
 \todr Y_x^{(\nu,f)},
 \ee
where the $ \wh X_{in}^{(\nu f)}$ are i.i.d. with the distribution in \eqref{HM8} and $Y_x^{(\nu,f)}$ is  infinitely divisible with 
 characteristic function
\be\label{ycf}
\EE\big(e^{\rmi \tau Y_x^{(\nu,f)}}\big)
=
\exp\Big(\frac{\alpha x}{\Gamma(1-\alpha)}
  \int_{0}^1
 (e^{ \rmi\tau  y} -1 ) e^{\nu f(y)} y^{-\alpha-1} \rmd y\Big). 
\ee
This integral  is absolutely convergent, so each rv $Y_x^{(\nu,f)}$ has a continuous density.  Each $Y_x^{(\nu,f)}$  has support $(0,\infty)$.
A second limit is 
    \be\label{3.n}
    \lim_{n\to\infty}
n\PP\Big(\sum_{i=1}^{\lf xn^\alpha\rf} \wh X_{in}^{(\nu f)}=n\Big)
=
 \frac{1}{2\pi } \int_{-\infty}^{\infty} e^{-\rmi \tau}
 \EE (e^{\rmi \tau Y_x^{(\nu,f)}}) \rmd \tau = f_{Y_x^{(\nu,f)}} (1),
 \ee
 where the RHS 
 is finite  and positive for each $\nu$, $f$ and $x$.
 \end{lemma}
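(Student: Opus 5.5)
The plan is to prove both limits in Lemma \ref{ylem} by a direct analysis of the triangular array, using the asymptotics $q_j\sim \alpha j^{-\alpha-1}/\Gamma(1-\alpha)$ as $j\to\infty$, which follow from Stirling's formula. I read the weight in \eqref{HM8} as $e^{\nu f(j/n)}$, since $\nu b_n u_{jn}/k_n=\nu f(j/n)$ when $b_n=k_n$. Throughout I use that $f$ is bounded, of bounded variation, with $f(0+)=0$.

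\textbf{Step 1: the normalising denominator.} Write $D_n=\sum_{\ell\le n}q_\ell e^{\nu f(\ell/n)}$ for the denominator in \eqref{HM8}. For each fixed $\ell$ we have $f(\ell/n)\to 0$. Also $\sum_\ell q_\ell=1$ and $e^{\nu f}\le e^{|\nu|\sup|f|}$. Dominated convergence therefore gives $D_n\to 1$.

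\textbf{Step 2: weak convergence \eqref{6.5a}.} I would verify Kallenberg's Conditions \eqref{210}--\eqref{10b0} with $k_n=\lf xn^\alpha\rf$ and
\[
\mu^{(\nu,f)}(\rmd y)=\frac{\alpha x}{\Gamma(1-\alpha)}\,e^{\nu f(y)}y^{-\alpha-1}\rmd y \quad\text{on } (0,1].
\]
For \eqref{210}, $k_n\PP(\wh X_{1n}^{(\nu f)}>hn)=xn^{\alpha}D_n^{-1}\sum_{hn<j\le n}q_je^{\nu f(j/n)}$. This is a Riemann sum for $\frac{\alpha x}{\Gamma(1-\alpha)}\int_h^1 e^{\nu f(y)}y^{-\alpha-1}\rmd y$. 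It converges because $f$ is BV, hence Riemann integrable, and $y^{-\alpha-1}$ is bounded on $[h,1]$.

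For \eqref{10a0}, $k_n\EE\big(n^{-1}\wh X_{1n}^{(\nu f)};\wh X_{1n}^{(\nu f)}<hn\big)=xn^{\alpha-1}D_n^{-1}\sum_{j<hn}jq_je^{\nu f(j/n)}$. I would split this at $j=\epsilon n$. The part with $j\ge\epsilon n$ is again a Riemann sum. The part with $j<\epsilon n$ is $O(\epsilon^{1-\alpha})$ uniformly in $n$, since $jq_j\le Cj^{-\alpha}$ and $e^{\nu f}$ is bounded.

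Condition \eqref{10b0} is handled in the same way with $y^{1-\alpha}$, and the same splitting gives $a=0$. Then \eqref{cfY} yields \eqref{ycf}.

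\textbf{Step 3: properties of $Y_x^{(\nu,f)}$.} Set $c=e^{-|\nu|\sup|f|}>0$. The real part of the exponent in \eqref{ycf} is at most
\[
-\frac{c\alpha x}{\Gamma(1-\alpha)}\int_0^1(1-\cos\tau y)\,y^{-\alpha-1}\rmd y\le -c'|\tau|^\alpha \quad\text{for } |\tau|\ge 1.
\]
Hence the characteristic function is integrable, the density is continuous, and \eqref{f1} converges absolutely.

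For the support and positivity, write $Y_x^{(\nu,f)}=Z+W$ as an independent sum. Here $Z$ has L\'evy density $c\,\frac{\alpha x}{\Gamma(1-\alpha)}y^{-\alpha-1}$ on $(0,1]$, and $W$ has the nonnegative remainder of the L\'evy density. The variable $Z$, a truncated stable law, has infinite L\'evy measure charging every neighbourhood of 0. By the standard support results for subordinators (e.g.\ Tucker, or Sato's results on self-decomposable-type truncations) it has a continuous density positive on $(0,\infty)$. Convolving with the law of $W\ge 0$ preserves positivity on $(0,\infty)$. This gives that $f_{Y_x^{(\nu,f)}}(1)>0$.

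\textbf{Step 4: local limit \eqref{3.n}, the main obstacle.} Starting from \eqref{3.num0}, I would split the $\tau$-integral into three ranges: $|\tau|\le A$, $A<|\tau|\le\delta n$, and $\delta n<|\tau|\le\pi n$.

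On $|\tau|\le A$, the characteristic functions converge uniformly by Step 2.

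On $\delta n<|\tau|\le\pi n$, the law \eqref{HM8} charges both $1$ and $2$, so it is aperiodic. Moreover its probabilities are uniformly bounded below at $j=1,2$ by Step 1. Hence $|\phi_{\wh X_n}(t)|\le 1-\eta$ for $\delta\le|t|\le\pi$, uniformly in $n$. This part is then at most $n\,e^{-\eta xn^\alpha}\to 0$.

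The middle range is the hard step. I need a domination bound $|\phi_{\wh X_n}(\tau/n)|^{k_n}\le e^{-c''|\tau|^\alpha}$ for $|\tau|\le\delta n$. I would get it from
\[
1-\mathrm{Re}\,\phi_{\wh X_n}(t)\ge c\,D_n^{-1}\sum_{j\le n}q_j(1-\cos jt)\ge c'''|t|^\alpha \quad\text{for } 1/n\le|t|\le\delta.
\]
This uses $q_j\ge C j^{-\alpha-1}$ and restricts the sum to the $j\asymp 1/|t|$, which lie in $[1,n]$. Combining with $|\phi|\le e^{-(1-\mathrm{Re}\,\phi)}$ and $k_n\asymp xn^\alpha$ gives the bound. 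Then dominated convergence, followed by letting $A\to\infty$, yields \eqref{3.n}. The limit is finite by Step 3, and positive by the positivity of the density established there.
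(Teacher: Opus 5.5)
\textbf{Verdict: genuine gap in the middle range of Step 4; it is easy to repair.}

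The gap is in the range $A<|\tau|\le\delta n$, which you rightly identify as the crux. The inequality $|\phi(t)|\le e^{-(1-\mathrm{Re}\,\phi(t))}$ that you combine with your lower bound on $1-\mathrm{Re}\,\phi$ is false in general. For a point mass at $1$, $|\phi|\equiv 1$ while $1-\mathrm{Re}\,\phi(t)=1-\cos t>0$.

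More precisely, write $a=1-\mathrm{Re}\,\phi$ and $b=\mathrm{Im}\,\phi$. Then $|\phi|^2=(1-a)^2+b^2$ and $e^{-2a}\le 1-2a+2a^2$, so $|\phi|^2-e^{-2a}\ge b^2-a^2$. Hence the inequality fails whenever $|\mathrm{Im}\,\phi|>1-\mathrm{Re}\,\phi$.

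That is exactly the situation here, because the limit law is totally skewed. Already for $\nu=0$ and without truncation, $\sum_{j\ge1}q_je^{\rmi jt}=1-(1-e^{\rmi t})^\alpha$. So $1-\phi(t)\sim |t|^\alpha e^{-\rmi\pi\alpha\,\mathrm{sgn}(t)/2}$ as $t\to0$, and $|b|/a\to\tan(\pi\alpha/2)>1$ when $\alpha>1/2$. The same holds for $\phi_{\wh X_n}(\tau/n)$ in your middle range once $A$ is large and $\delta$ small, since the truncation at $n$ and the tilt $e^{\nu f(j/n)}$ only contribute $o(|t|^\alpha)$ there. So a lower bound on $1-\mathrm{Re}\,\phi$ alone does not yield the domination $|\phi_{\wh X_n}(\tau/n)|^{\lf xn^\alpha\rf}\le e^{-c''|\tau|^\alpha}$. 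The imaginary part must be controlled as well.

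\textbf{Repair.} Work with $|\phi|^2$, the real characteristic function of $\wh X_{1n}-\wh X_{2n}$, so that $|\phi|^{k}\le \exp(-\tfrac{k}{2}(1-|\phi|^2))$ as in \eqref{inal}.

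\emph{The paper's route.} It writes $1-|\phi|^2=(1-\EE\cos)(1+\EE\cos)-(\EE\sin)^2$. It uses the same lower bound $1-\EE\cos(\tau\wh X_n/n)\ge c|\tau|^\alpha n^{-\alpha}$ that you derive, and bounds $1+\EE\cos$ below by a constant. It then proves the extra estimate $|\EE\sin(\tau\wh X_n/n)|\le C|\tau|^\alpha n^{-\alpha}$. This gives $(\EE\sin)^2\le C^2\veps^\alpha|\tau|^\alpha n^{-\alpha}$, which is absorbed for $|\tau|\le\veps n$ with $\veps$ small.

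\emph{A shortcut that avoids the sine estimate.} With $p_j=\PP(\wh X_{1n}^{(\nu f)}=j)$,
\[
1-|\phi(t)|^2=\sum_{j,k}p_jp_k\big(1-\cos((j-k)t)\big)\ge 2p_1\sum_{j\ge2}p_j\big(1-\cos((j-1)t)\big).
\]
Here $p_1\to q_1=\alpha$ by your Step 1. Restricting to $j-1\asymp 1/|t|$ then gives $1-|\phi(t)|^2\ge c|t|^\alpha$ for $A/n<|t|\le\delta$. With either fix, Step 4 is complete.

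\textbf{The rest of the proposal} is sound and essentially follows the paper's route. Your $\epsilon$-splitting in the Kallenberg conditions makes the paper's Riemann-sum step near $0$ rigorous. For positivity you split off a truncated stable part $Z$ rather than citing Hudson--Tucker. That works once positivity of the density of $Z$ on $(0,\infty)$ is justified, e.g.\ via self-decomposability and unimodality of $Z$.
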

\eqref{3.n} applied to the ratio of probabilities in  \eqref{TS-} gives $ f_{Y_x^{(\nu,f)}}(1)/ f_{Y_x^{(0)}}(1)$, 
as required in \eqref{ST}.
Next we analyse the first factor on the RHS of  \eqref{TS-},  concerning the $V$ variables.
At this stage  \eqref{ig} is needed.
The proof of the next lemma is also deferred to the Appendix.

  \begin{lemma}\label{vcon}
 Assume \eqref{ig} and the above choices of $\bfu_n$, $b_n$ and $k_n$. 
The limit of the mgf on   the RHS of   \eqref{TS-}
 is then
     \be\label{vfin}
      \exp\Big(\frac{x\alpha}{\Gamma(1-\alpha)}
   \int_0^1 y^{-\alpha-1}  \big(
e ^{\nu  f(y) }  -1\big)\rmd y\Big).
\ee
\end{lemma}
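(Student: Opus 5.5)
The plan is to compute the mgf in closed form, take logarithms, and reduce the limit to a Riemann-sum approximation near zero. With $b_n=k_n=\lf xn^\alpha\rf$, the first factor on the RHS of \eqref{TS-} is
\[
\EE\Big(\exp\Big(\nu\sum_{i=1}^{k_n}V_{in}^{(\bff_n)}\Big)\Big)
=\Big(\sum_{j=1}^n q_{jn}e^{\nu f(j/n)}\Big)^{k_n}
=\Big(1+\sum_{j=1}^n q_{jn}\big(e^{\nu f(j/n)}-1\big)\Big)^{k_n},
\]
using $\sum_j q_{jn}=1$. Write $\delta_n:=\sum_{j=1}^n q_{jn}(e^{\nu f(j/n)}-1)$. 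It then suffices to show that $k_n\delta_n\to \frac{x\alpha}{\Gamma(1-\alpha)}\int_0^1 y^{-\alpha-1}(e^{\nu f(y)}-1)\rmd y$ and that $\delta_n\to0$. Once both hold, $k_n\log(1+\delta_n)=k_n\delta_n+O(k_n\delta_n^2)$ has the same limit, and \eqref{vfin} follows.

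For the asymptotics of $\delta_n$, I will use $q_{+n}\to1$ (since $\sum_{j\ge1}q_j=1$) and Stirling's formula:
\[
q_j=\frac{\alpha\Gamma(j-\alpha)}{\Gamma(j+1)\Gamma(1-\alpha)}=\frac{\alpha}{\Gamma(1-\alpha)}j^{-\alpha-1}(1+O(1/j)).
\]
Then
\[
n^\alpha\delta_n\sim \frac{\alpha}{\Gamma(1-\alpha)}\,\frac1n\sum_{j=1}^n (j/n)^{-\alpha-1}\big(e^{\nu f(j/n)}-1\big)\,(1+O(1/j)).
\]
Set $g(y)=y^{-\alpha-1}(e^{\nu f(y)}-1)$. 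Because $f$ is bounded, $|e^{\nu f}-1|\le C|f|$, so \eqref{ig} gives $g\in L^1(0,1)$. Multiplying by $k_n/n^\alpha\to x$ then gives the claimed limit, provided the Riemann sum converges to $\int_0^1 g$. It also gives $\delta_n=O(n^{-\alpha})\to0$.

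The main obstacle is this Riemann-sum convergence, since $g$ may be singular at $0$. I will split at $\eta>0$. On $[\eta,1]$, $f$ is of bounded variation and hence Riemann integrable, so $e^{\nu f}-1$ is too, and $y^{-\alpha-1}$ is continuous there; the sum over $j\ge\eta n$ converges to $\int_\eta^1 g$. The $O(1/j)$ corrections contribute $O(\frac1n\sum_{j\ge \eta n}(j/n)^{-\alpha-1}/j)\to0$.

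On $j<\eta n$, I need a bound on $\frac1n\sum_{j<\eta n}(j/n)^{-\alpha-1}|f(j/n)|$ that is uniform in $n$ and tends to $0$ as $\eta\to0$. Integrability \eqref{ig} alone does not control the values at grid points, so here I will use the bounded-variation hypothesis together with $f(0+)=0$. Write $f=f_1-f_2$ with $f_1,f_2$ nondecreasing near $0$ and vanishing at $0+$; this is available since $f(0+)=0$, taking the positive and negative variations. Monotonicity gives $|f(j/n)|\le f_1(j/n)+f_2(j/n)\le n\int_{j/n}^{(j+1)/n}(f_1+f_2)$. Also $(j/n)^{-\alpha-1}\le 2^{\alpha+1}y^{-\alpha-1}$ for $y\in[j/n,(j+1)/n]$, $j\ge1$. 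Hence the small-$j$ sum is bounded by $C\int_0^{2\eta}y^{-\alpha-1}(f_1+f_2)(y)\rmd y$.

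I will interpret \eqref{ig} as holding for the total variation function $|f|_{TV}=f_1+f_2$, or check that it follows near $0$; this is the delicate point, and I would state it as the precise meaning of the growth condition. With that, the tail bound is $o(1)$ as $\eta\to0$, uniformly in $n$. Letting $n\to\infty$ and then $\eta\to0$ finishes the proof.
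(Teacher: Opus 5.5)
Your route is the paper's: write the factor as $(1+\delta_n)^{k_n}$, use $\log(1+\delta_n)=\delta_n+O(\delta_n^2)$, and identify $\lim n^\alpha\delta_n$ through a Riemann sum for $\frac{\alpha}{\Gamma(1-\alpha)}\int_0^1 y^{-\alpha-1}(e^{\nu f(y)}-1)\,\rmd y$. The paper handles the last step in one line. It asserts $\sum_j q_{jn}|f(j/n)|=O\big(n^{-\alpha}\int_0^1 y^{-\alpha-1}|f(y)|\,\rmd y\big)$ and then ``approximates the sum by an integral''. You instead observe, correctly, that \eqref{ig} says nothing about the values of $f$ at the grid points $j/n$ near $0$, and you control those values with a monotone majorant.

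Drop the fallback ``or check that it follows near $0$''. It does not follow, and without an extra hypothesis the lemma (and with it \eqref{ST}) is false. Take
\[
f(y)=\sum_{m\ge1} m^2 2^{-m\alpha}\max\{0,\,1-8^m|y-2^{-m}|\},\qquad 0\le y\le 1 .
\]
The bumps have disjoint supports, so $f$ is continuous and nonnegative, $f(0+)=f(0)=0$, and its total variation is $2\sum_m m^2 2^{-m\alpha}<\infty$. Since $y\ge 2^{-m-1}$ on the $m$-th bump,
\[
\int_0^1 y^{-\alpha-1}f(y)\,\rmd y\le 2^{\alpha+1}\sum_{m\ge1} m^2 2^{-m\alpha}\,8^{-m}\,2^{m(\alpha+1)}=2^{\alpha+1}\sum_{m\ge1} m^2 4^{-m}<\infty,
\]
so \eqref{ig} holds. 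However $f(2^{-k})=k^2 2^{-k\alpha}$. Along $n=2^k$ with $\nu>0$, every term of $\delta_n$ is nonnegative, and the $j=1$ term alone gives
\[
k_n\delta_n\ \ge\ \lf xn^\alpha\rf\, q_{1n}\big(e^{\nu f(1/n)}-1\big)\ \ge\ \lf xn^\alpha\rf\, q_{1n}\,\nu f(1/n)\ \sim\ \alpha\nu x\,k^2\to\infty .
\]
Hence $(1+\delta_n)^{k_n}\ge 1+k_n\delta_n\to\infty$, while \eqref{vfin} is finite. The same example violates the paper's bound on $R_n$. Directly: $\bff_n^T\bfM_n\ge f(1/n)M_{1n}$, and $M_{1n}$ is of order $n^\alpha$.

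So state the strengthened growth condition as an explicit hypothesis. A convenient form is
\[
\int_0^1 y^{-\alpha-1}\sup_{0<s\le y}|f(s)|\,\rmd y<\infty .
\]
This is weaker than your total-variation version, and your small-$j$ estimate applies to it verbatim, because the running supremum is nondecreasing and dominates $|f|$. It holds for every $f$ the paper actually uses: $y^p$ with $p>\alpha$, and ${\bf 1}_{[\lambda,\mu]}$ with $\lambda>0$. With that hypothesis your proof is complete.
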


For  the third factor on the RHS of  \eqref{TS-}, use  \eqref{M4.2N1} and the formula  for  $C_{nk}(\alpha, \theta)$
from  \eqref{FSFN}
to write, for any $k_n\to\infty$, 
\bea\label{3}
\PP(K_n(\alpha, \theta)=k_n)
&=&
\frac{C_{nk_n}(\alpha, \theta)q_{+n}^{k_n}}{k_n!}
\times
\PP\Big(\sum_{i=1}^{k_n} \wh X_{in}^{(0)}=n\Big)\cr
&=&
\frac{C_{nk_n}(\alpha, \theta)}{C_{nk_n}(\alpha,0)}
\times
\PP(K_n(\alpha,0)=k_n)
\cr
&=&
\frac{\Gamma(\theta+1)}{\Gamma(\theta/\alpha+1)}
\times
\frac{\Gamma(n)\Gamma(\theta/\alpha+k_n)}
{\Gamma(n+\theta)\Gamma(k_n)}
\times
 \PP(K_n(\alpha,0)=k_n)\cr
 &\sim&
 \frac{\Gamma(\theta+1)}{\Gamma(\theta/\alpha+1)}
\times \frac{k_n^{\theta/\alpha}}{n^\theta} 
\times \PP(K_n(\alpha,0)=k_n),
\ {\rm as}\ n\to\infty.
\eea
 In the last line we used Stirling's formula to  approximate the second factor.
By \eqref{M4.2N1} and  $C_{nk}(\alpha,0)= n\Gamma(k)/\alpha$, we have 
\be\label{5}
n^\alpha \PP(K_n(\alpha,0)=k_n)
= \frac{n^\alpha}{\alpha k_n}\times q_{+n}^{k_n}
\times n\PP\Big(\sum_{i=1}^{k_n} \wh X_{in}^{(0)}=n\Big).
\ee
Substitute $k_n= \lf xn^\alpha\rf$ in this, 
and note that, as $n\to\infty$, we have, successively,  $n^\alpha/\alpha k_n\to x^{-1}/\alpha$, 
\be\label{5a}
q_{+n}^{k_n}
=\Big(\sum_{j=1}^n 
\frac{\alpha \Gamma(j-\alpha)}{j!\Gamma(1-\alpha)}
\Big)^{\lf xn^\alpha\rf}
=\Big(1-\frac{\alpha}{\Gamma(1-\alpha)}
\sum_{j>n} 
\frac{\Gamma(j-\alpha)}{j!}
\Big)^{\lf xn^\alpha\rf}
\to e^{-x/\Gamma(1-\alpha)}
\ee
(because $\Gamma(j-\alpha)/j! \sim j^{-1-\alpha}$ as $j\to \infty$),
and 
$n\PP\Big(\sum_{i=1}^{k_n} \wh X_{in}^{(0)}=n\Big)\to f_{Y_x^{(0)}}(1)$  (by \eqref{3.n})   to  conclude
\be\label{4}
\lim_{n\to\infty}  n^\alpha\PP(K_n(\alpha,0)=\lf xn^\alpha\rf)
= x^{-1} e^{-x/\Gamma(1-\alpha)} f_{Y_x^{(0)}}(1)/\alpha.
\ee
The RHS of \eqref{4} equals $g_\alpha(x)$, where 
$g_\alpha(x)$ is the Mittag-Leffler density in \cite{{Pitman2006}}, p.11.
This fact, which we will use in the proof of \eqref{f25}, 
 is shown in Theorem 3 of  \cite{MallerShemehsavar2025}.
Consequently from \eqref{3}  
and \eqref{4} we get 
\be\label{3b}
\lim_{n\to\infty} n^\alpha
\PP(K_n(\alpha,\theta)=\lf xn^\alpha\rf)
=
 \frac{x^{\theta/\alpha-1}
\Gamma(\theta+1)}{\alpha\Gamma(\theta/\alpha+1)}
e^{-x/\Gamma(1-\alpha)} 
 f_{Y_x^{(0)}}(1)
 =
 \frac{\alpha x^{\theta/\alpha}
\Gamma(\theta)}{\Gamma(\theta/\alpha)}g_\alpha(x).
\ee
\eqref{3b} contains the local limit theorem in \cite{Pitman1999} and 
\cite{{Pitman2006}}, p.72, which is a component of \eqref{ST}.
Pitman's proof is via ladder times of random walks, our  proof  uses Lemma \ref{ylem}.

The limit of the RHS of \eqref{TS-} is given by multiplying  \eqref{vfin} with the limit of the ratio of probabilities 
on the RHS of \eqref{TS-}, using \eqref{3.n}, together with the RHS of \eqref{3b}.   This gives \eqref{ST}. The equality in  \eqref{f1} follows from \eqref{ycf} and the  Fourier inversion  formula in 
\cite{GK1968}, p.233.

 The integral in \eqref{f1} is absolutely convergent,  as shown in the proof of  Lemma \ref{ylem} (see \eqref{a1}), so $Y_x^{(\nu,f)}$ has a continuous density $f_{Y_x^{(\nu,f)}}(z)$ for all $z>0$  by Prop. 28.1, p.190, of \cite{sato99}.
Since $Y_x^{(\nu,f)}$ has zero drift and L\'evy measure with support in $[0,\infty)$,
$Y_x^{(\nu,f)}$ has support $[0,\infty)$
 by Thm. 24.3 and Cor. 24.8, p.151, of \cite{sato99}.
 Finally, the factor $f_{Y_\theta^{(\nu,f)}}(1)$   in \eqref{ST} is positive (including, for $\nu=0$), because 
from  Theorem 2 of \cite{HT1975} we can deduce that 
  the density of the absolutely continuous infinitely divisible rv
   $Y_x^{(\nu,f)}$  
is positive a.e. w.r.t Lebesque measure over its support.
But since the density is continuous, 
$f_{Y_x^{(\nu,f)}}>0$ on $(0,\infty)$, 
and in particular,  $f_{Y_x^{(\nu,f)}}(1)>0$.
This completes the proof of Theorem \ref{2para0}.      \halmos

The next result, a consequence of \eqref{3b},  is also proved in the Appendix.

     \begin{lemma}\label{blem}
     In the Pitman-Yor  model, we have
           \be\label{f29}
1=  \lim_{n\to\infty}
n^\alpha\int_0^\infty  \PP\big(K_n(\alpha,\theta)= \lf xn^\alpha\rf  \big)\rmd x
= \frac{\Gamma(\theta)}
{\Gamma(\theta/\alpha)}
\int_0^\infty x^{\theta/\alpha-1}e^{-x/\Gamma(1-\alpha)}f_{Y_x^{(0)}}(1)\rmd x.
\ee
      \end{lemma}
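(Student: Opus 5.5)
Lemma \ref{blem} makes three assertions: the quantity $n^\alpha\int_0^\infty \PP(K_n(\alpha,\theta)=\lf xn^\alpha\rf)\rmd x$ equals 1 for every $n$, its limit can be taken inside the integral, and the resulting integral matches the right-hand side by \eqref{3b}. I would prove them in that order.

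\textbf{Step 1: exact identity.} For $x\in(0,1/n^\alpha)$ we have $\lf xn^\alpha\rf=0$, and $\PP(K_n=0)=0$. For $x\in[k/n^\alpha,(k+1)/n^\alpha)$ the integrand equals $\PP(K_n=k)$ on an interval of length $n^{-\alpha}$. Hence
\[
n^\alpha\int_0^\infty \PP\big(K_n(\alpha,\theta)=\lf xn^\alpha\rf\big)\rmd x=\sum_{k=1}^n \PP(K_n(\alpha,\theta)=k)=1 .
\]
So the first equality holds for every $n$, and the limit is trivially 1.

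\textbf{Step 2: pointwise limit.} By \eqref{3b}, for each $x>0$,
\[
h_n(x):=n^\alpha\PP(K_n(\alpha,\theta)=\lf xn^\alpha\rf)\to h(x):=\frac{\Gamma(\theta+1)x^{\theta/\alpha-1}}{\alpha\Gamma(\theta/\alpha+1)}\,e^{-x/\Gamma(1-\alpha)}f_{Y_x^{(0)}}(1),
\]
and $\Gamma(\theta+1)/(\alpha\Gamma(\theta/\alpha+1))=\Gamma(\theta)/\Gamma(\theta/\alpha)$. The second equality in \eqref{f29} therefore amounts to $\int h=1$, i.e.\ to justifying $\lim\int h_n=\int h$.

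\textbf{Step 3: interchange of limit and integral.} Scheffé's lemma would need $\int h=1$, which is exactly what we want to prove, so I would use Fatou in the other direction. Fatou gives $\int h\le\liminf\int h_n=1$. For the reverse inequality it suffices to show the family $\{h_n\}$ is tight: for every $\varepsilon>0$ there are $0<\delta<A<\infty$ with
\[
\int_{(0,\delta)\cup(A,\infty)}h_n\le\varepsilon\quad\text{for all large } n .
\]
This is precisely tightness of $K_n/n^\alpha$, i.e.\ $\PP(K_n<\delta n^\alpha)+\PP(K_n>An^\alpha)\le\varepsilon$.

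Tightness follows from the known convergence $K_n(\alpha,\theta)/n^\alpha\to S_\alpha$ almost surely, with $S_\alpha$ a.s.\ finite and positive (\cite{Pitman2006}, p.68), since a.s.\ convergence implies convergence in distribution. If a self-contained route is preferred, Markov's inequality on $\EE K_n\sim cn^\alpha$ handles the upper tail. On $[\delta,A]$ one then uses Fatou plus tightness rather than uniform convergence, which only requires $h_n\to h$ pointwise there. This gives $\int_\delta^A h\ge 1-\varepsilon$, and since $\varepsilon$ is arbitrary, $\int h=1$.

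\textbf{Main obstacle.} The delicate point is tightness near $x=0$ (the lower tail of $K_n$) when $-\alpha<\theta\le0$. There $x^{\theta/\alpha-1}$ is not locally integrable unless compensated by $f_{Y_x^{(0)}}(1)\to0$ as $x\to0$. I would lean on the a.s.\ positivity of the Mittag-Leffler limit rather than estimate $f_{Y_x^{(0)}}(1)$ directly. Integrability of $h$ itself is automatic from Fatou.
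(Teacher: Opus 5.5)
\textbf{There is a genuine gap in Step 3.} Steps 1 and 2 are fine. Your Step 1 (the integral equals $\sum_{k=1}^n\PP(K_n(\alpha,\theta)=k)=1$ exactly, for every $n$) is cleaner than the paper's truncation at $Kn^\alpha$ followed by an appeal to the Mittag-Leffler limit.

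Step 3 is where the second equality in \eqref{f29} is actually proved, and the argument there fails. On $[\delta,A]$, Fatou's lemma gives $\int_\delta^A h\le\liminf_n\int_\delta^A h_n$. This is the wrong direction: knowing $\int_\delta^A h_n\ge 1-\veps$ tells you nothing about a lower bound for $\int_\delta^A h$. Tightness only stops the mass of $h_n$ from escaping to $0$ or $\infty$. It does not stop mass from concentrating in ever thinner spikes inside $[\delta,A]$. Pointwise convergence $h_n\to h$ is compatible with such spikes, if their positions move so that each fixed $x$ is hit only finitely often, and then $\int h<1$. So the claim ``it suffices to show $\{h_n\}$ is tight'' is false, and ``Fatou plus tightness \dots{} gives $\int_\delta^A h\ge 1-\veps$'' does not follow. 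What is missing is uniform integrability of $(h_n)$ on compact subsets of $(0,\infty)$. The lower tail of $K_n$, which you flag as the main obstacle, is actually the easy part.

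\textbf{Repairing your route.} Take $k=\lf xn^\alpha\rf$ with $x\in[\delta,A]$ and write $h_n(x)$ via \eqref{3} and \eqref{5} as a product of four factors:
\begin{itemize}
\item $C_{nk}(\alpha,\theta)/C_{nk}(\alpha,0)$, which is uniformly bounded for $k\asymp n^\alpha$ by Stirling;
\item $n^\alpha/(\alpha k)$, which is at most $2/(\alpha\delta)$ for large $n$;
\item $q_{+n}^k\le 1$;
\item $n\PP(\sum_{i=1}^k\wh X_{in}^{(0)}=n)\le\frac1{2\pi}\int_{-\pi n}^{\pi n}|\phi_{\wh X_n}(\tau/n)|^k\,\rmd\tau$.
\end{itemize}
For the last factor, the integrand is trivially at most $1$ for $|\tau|\le 1$. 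The estimates in the proof of Lemma \ref{ylem} give $|\phi_{\wh X_n}(\tau/n)|^k\le e^{-c\delta|\tau|^\alpha}$ for $1<|\tau|\le\veps n$ and $\le e^{-C\delta n^\alpha}$ for $\veps n<|\tau|\le\pi n$, uniformly in $k\ge\delta n^\alpha/2$. Hence $\sup_n\sup_{x\in[\delta,A]}h_n(x)<\infty$, and dominated convergence gives $\int_\delta^A h_n\to\int_\delta^A h$. Combined with your tightness, this yields $\int h=1$ without ever identifying $h$.

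\textbf{Comparison with the paper.} The paper avoids the interchange altogether. By \eqref{3b} and the cited identification of $x^{-1}e^{-x/\Gamma(1-\alpha)}f_{Y_x^{(0)}}(1)/\alpha$ with the Mittag-Leffler density $g_\alpha$, the pointwise limit is $h(x)=\frac{\alpha\Gamma(\theta)}{\Gamma(\theta/\alpha)}x^{\theta/\alpha}g_\alpha(x)$. Then $\int h=1$ is just the known moment $\int_0^\infty x^{\theta/\alpha}g_\alpha(x)\,\rmd x=\Gamma(\theta/\alpha)/(\alpha\Gamma(\theta))$. Fatou there only compares two quantities each already known to equal $1$. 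Your repaired argument would be more self-contained: it needs only tightness of $K_n/n^\alpha$ in $(0,\infty)$, not the identification with $g_\alpha$ or its moments. The price is the uniform local bound above.
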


For the remaining results it's convenient to note the following conditional version of \eqref{ST},
whose proof is just to divide the RHS of \eqref{ST} by the RHS of \eqref{3b}.
  When \eqref{ig}  holds,  we have for  $ \nu\in\R$ and $x>0$:
  \be\label{ST2}
\lim_{n\to\infty}  
  \EE  \Big( \exp\Big(\nu  
\bff_n^T \bfM_{n}(\alpha,\theta)\Big)\Big|K_n(\alpha,\theta) =\lfloor x n^\alpha\rfloor \Big)
=
\frac{f_{Y_x^{(\nu,f)}}(1)}{f_{Y_x^{(0)}}(1)} 
   \times
      \exp\Big(\frac{x\alpha}{\Gamma(1-\alpha)}
   \int_0^1 y^{-\alpha-1}  \big(
e ^{\nu  f(y) }  -1\big)\rmd y\Big).
 \ee

 \medskip\noindent{\bf Proof of Corollary \ref{3parc}}:\
\eqref{STA}  follows immediately from  \eqref{ST2} with $f(y)=y^p$;  $p>\alpha$ is required for \eqref{ig}.
\halmos

  The case $p=1$ in Corollary \ref{3parc} provides a check on the calculations since we know $\sum_{j=1}^n jM_{jn}=n$.
In the Appendix 
we show that the corollary reduces correctly in this case.

    \medskip  \noindent  {\bf Proof of   Corollary \ref{3para}:}\
Working from \eqref{ST2}, substitute in \eqref{f1}  the function 
  $f(y)={\bf 1}_{\lambda\le y\le \mu}$ 
 to get
  \be\label{f11}
f_{Y_x^{(\nu,f)}}(1) = \frac{1}{2\pi}\int _{-\infty}^\infty e^{-\rmi \tau}
  \exp\Big(\frac{x\alpha }{\Gamma(1-\alpha)}
\Big[ \int_{0}^\lambda   +e^{\nu  }   \int_\lambda^\mu 
+\int_\mu^1\Big]
  (e^{ \rmi\tau  y} -1 ) y^{-\alpha-1} \rmd y
 \Big) \rmd \tau.
 \ee
 Write 
$  \int_{0}^\lambda   +e^{\nu  }   \int_\lambda^\mu 
+\int_\mu^1
  =
  \int_{0}^1   +(e^{\nu  } -1)  \int_\lambda^\mu$,
then
  \be\label{f12}
 f_{Y_x^{(\nu,f)}}(1) 
= \frac{1}{2\pi}\int _{-\infty}^\infty e^{-\rmi \tau}
  \EE(e^{\rmi \tau Y_x^{(0)}})
  \exp\Big(\frac{x\alpha (e^{\nu  }-1) }{\Gamma(1-\alpha)}
  \int_\lambda^\mu
  (e^{ \rmi\tau  y} -1 ) y^{-\alpha-1} \rmd y
 \Big) \rmd \tau,
 \ee
 where by \eqref{ycf} 
 \be\label{f13}
   \EE(e^{\rmi \tau Y_x^{(0)}})
   =
    \exp\Big(\frac{x\alpha}{\Gamma(1-\alpha)}
 \int_0^1
  (e^{ \rmi\tau  y} -1 ) y^{-\alpha-1} \rmd y\Big).
 \ee
From \eqref{ST2} with $\bff_n^T \bfM_{n}(\alpha,\theta)=
\sum _{j=\lf \lambda n\rf}^{\lf \mu n\rf} M_{jn}(\alpha,\theta)=S_n^{\lambda,\mu}(\alpha,\theta)$
and \eqref{f12} 
we can write
    \bea\label{f15}
    &&
    \lim_{n\to\infty}
    \EE\big(\exp(\nu S_n^{\lambda,\mu}(\alpha,\theta) |K_n(\alpha,\theta)= \lf xn^\alpha\rf\big)
\times f_{Y_x^{(0)}}(1)  \cr 
 &=&
f_{Y_x^{(\nu,f)}}(1) 
   \times
      \exp\Big(\frac{x\alpha}{\Gamma(1-\alpha)}
         \int_0^1 y^{-\alpha-1}  \big(
e ^{\nu  f(y) }  -1\big)\rmd y\Big)
\cr
&=&
f_{Y_x^{(\nu,f)}}(1) 
   \exp\Big(\frac{x\alpha (e^\nu-1)}{\Gamma(1-\alpha)}
    \int_\lambda^\mu  y^{-\alpha-1} \rmd y \Big)\cr
    &=&
 \frac{1}{2\pi}\int _{-\infty}^\infty e^{-\rmi \tau}
  \EE(e^{\rmi \tau Y_x^{(0)}})
  \exp\Big(\frac{x\alpha }{\Gamma(1-\alpha)}
(e^{\nu }-1)   \int_\lambda^\mu
 e^{ \rmi\tau  y}  y^{-\alpha-1} \rmd y
 \Big) \rmd \tau.
 \eea
 
To simplify this define 
 \be\label{f17a}
 g(x, \rmi\tau,\lambda,\mu)
 =
 \frac{x\alpha }{\Gamma(1-\alpha)}
  \int_\lambda^\mu  e^{ \rmi\tau  y}  y^{-\alpha-1} \rmd y
  =   xc(\lambda,\mu) 
 \frac{  \int_\lambda^\mu e^{ \rmi\tau  y}  y^{-\alpha-1} \rmd y}
 {  \int_\lambda^\mu  y^{-\alpha-1} \rmd y}
 \ee
   where 
 \be\label{f17}
   c(\lambda,\mu) 
  := \frac{\alpha }{\Gamma(1-\alpha)}
    \int_\lambda^\mu  y^{-\alpha-1} \rmd y
    =
     \frac{\lambda^{-\alpha} -\mu^{-\alpha}}{\Gamma(1-\alpha)}.
 \ee
 With this notation the expression on the RHS of \eqref{f15} equals
 \be\label{f18}
   \frac{1}{2\pi}\int _{-\infty}^\infty e^{-\rmi \tau}
  \EE(e^{\rmi \tau Y_x^{(0)}})
  \exp\big( g(x, \rmi\tau,\lambda, \mu)
(e^{\nu  }-1) \big)
\rmd \tau.
 \ee
 
Now we invert the mgfs in \eqref{f15} and \eqref{f18}.
Taking our cue from the Poisson distribution, define
 \be\label{f18a}
 P_{gs} =P_{gs}(x, \rmi\tau,\lambda, \mu)
 = e^{- g(x, \rmi\tau,\lambda, \mu)}
\frac{ \big( g(x, \rmi\tau,\lambda, \mu)\big)^s}
{s!}, \ s=0,1,\ldots.
 \ee
 The expression in \eqref{f18} can then be written as 
 \be\label{f19}
   \frac{1}{2\pi}\int _{-\infty}^\infty e^{-\rmi \tau}
  \EE(e^{\rmi \tau Y_x^{(0)}})
  \sum_{s=0}^\infty e^{\nu  s} 
P_{gs}(x, \rmi\tau,\lambda, \mu)
\rmd \tau,
 \ee
 and by \eqref{f15} this equals
 \be\label{f20}
     \lim_{n\to\infty}
    \EE\big(\exp\big(\nu S_n^{\lambda, \mu} (\alpha,\theta) \big)\big|K_n(\alpha,\theta) = \lf xn^\alpha\rf\big)
 \times
 f_{Y_x^{(0)}}(1).
 \ee
 Replace $\nu$ by $-\nu$, $\nu>0$, 
 so the mgf in \eqref{f20} becomes a (conditional) Laplace transform, $\phi_n(\nu)$.  Apply the inversion operator in \cite{Feller}, p.230,
 to write
 \be\label{f22}
    \PP\big(S_n^{\lambda, \mu}(\alpha,\theta) \le z\big|K_n(\alpha,\theta) = \lf xn^\alpha\rf\big)
    =
    \lim_{L\to\infty}\sum _{\ell\le zL} \frac{(-1)^\ell}{\ell!}
    L^\ell \phi_n^{(\ell)}(L),\ z>0,
 \ee
 at points of continuity of the distribution.
 Applying the same operator to 
 $  \sum_{s=0}^\infty e^{-\nu  s} 
P_{gs}(x, \rmi\tau,\lambda, \mu)$
(from \eqref{f19} with $\nu$ replaced by $-\nu$) produces
 \ben
 \sum_{s\le z} P_{gs}(x, \rmi\tau,\lambda, \mu)
 =
  \sum_{s\le z} e^{- g(x, \rmi\tau,\lambda, \mu)}
\frac{ \big( g(x, \rmi\tau,\lambda, \mu)\big)^s}{s!}
 \een
  at points of continuity.
Interchange limit and integral in \eqref{f19}   to obtain
 \bea\label{f23}
&& \lim_{n\to\infty}
  \PP\big(S_n^{\lambda, \mu}(\alpha,\theta) \le z\big|K_n(\alpha,\theta) = \lf xn^\alpha\rf\big)\cr
&&
    =  \frac{1}{2\pi f_{Y_x^{(0)}}(1)}\int _{-\infty}^\infty e^{-\rmi \tau}
  \EE(e^{\rmi \tau Y_x^{(0)}})
    \sum_{s\le z} e^{- g(x, \rmi\tau,\lambda, \mu)}
\frac{ \big( g(x, \rmi\tau,\lambda, \mu)\big)^s}{s!} \rmd \tau.
 \eea
 Replacing $z$ by $\ell$ then subtracting the same expressions for $\ell-1$ gives, for $\ell=0,1,\ldots$,
  \bea\label{f24}
  &&
      \lim_{n\to\infty}
    \PP\big(S_n^{\lambda, \mu}(\alpha,\theta) =\ell\big|K_n(\alpha,\theta) = \lf xn^\alpha\rf\big) \cr
    &&
    =  \frac{1}{2\pi  f_{Y_x^{(0)}}(1)}\int _{-\infty}^\infty e^{-\rmi \tau}
  \EE(e^{\rmi \tau Y_x^{(0)}})
e^{- g(x, \rmi\tau,\lambda, \mu)}
\frac{ \big( g(x, \rmi\tau,\lambda, \mu)\big)^\ell}{\ell!} \rmd \tau.
 \eea
Take $\nu=0$ in \eqref{f15} to see that
\be\label{to1}
  \frac{1}{2\pi  f_{Y_x^{(0)}}(1)}
\int _{-\infty}^\infty e^{-\rmi \tau}
  \EE(e^{\rmi \tau Y_x^{(0)}}) \rmd \tau =1,
\ee
so adding the RHS of \eqref{f24} over $\ell\ge 0$ gives 1,  the limit is a proper distribution, and  the interchange  is valid.

To rewrite \eqref{f24} in the  form of \eqref{f25},  notice from \eqref{f17a} that
\ben
e^{- g(x, \rmi\tau,\lambda, \mu)}
=
\exp\Big(-\frac{\alpha x}{\Gamma(1-\alpha)}
 \int_\lambda^\mu e^{ \rmi\tau  y} y^{-\alpha-1} \rmd y\Big)
 = 
e^{- x c(\lambda,\mu)}
 \EE(e^{\rmi \tau \calI(x,\lambda)})
  \EE(e^{\rmi \tau \calJ(x,\mu)}),
\een 
where 
$\calI(x,\lambda)$ and $\calJ(x,\mu)$ are infinitely  divisible rvs with 
\be\label{defI}
 \EE(e^{\rmi \tau \calI(x,\lambda)})
 =
\exp\Big(\frac{\alpha x}{\Gamma(1-\alpha)}
 \int_0^\lambda
(e^{ \rmi\tau  y}-1)   y^{-\alpha-1} \rmd y\Big)
\ee
and 
\be\label{defJ}
 \EE(e^{\rmi \tau \calJ(x,\mu)})
 =
\exp\Big(-\frac{\alpha x}{\Gamma(1-\alpha)}
 \int_0^\mu
(e^{ \rmi\tau  y}-1)   y^{-\alpha-1} \rmd y\Big).
\ee
Define  also a rv $G(\lambda,\mu)$ with density 
$y^{-\alpha-1}{\bf 1}_{\lambda<y\le \mu}/ \int_\lambda^\mu
 y^{-\alpha-1} \rmd y$.
Then from \eqref{f17a} and\eqref{f17}
\be\label{f26g}
  g(x, \rmi\tau,\lambda,\mu)=
 x c(\lambda,\mu)
  \EE(e^{\rmi \tau G(\lambda,\mu)}) 
  \quad  {\rm and}  \quad
\big( g(x, \rmi\tau,\lambda,\mu)\big)^\ell
 =  (xc(\lambda,\mu)^\ell
 \EE(e^{\rmi \tau \sum_{s=1}^\ell G_s(\lambda,\mu)}),
\ee
where $ G_s(\lambda,\mu)$, $1\le s\le \ell$, are iid copies of 
 $G(\lambda,\mu)$.
Then let $ H_\ell(\lambda,\mu)=  \sum_{s=1}^\ell G_s(\lambda,\mu)$
 be the sum of  $\ell$ iid copies of  $G(\lambda,\mu)$ and rewrite the RHS of \eqref{f24}  to obtain \eqref{f25}.   \halmos

    \medskip  \noindent  {\bf Proof of  Theorem \ref{4para}:}\
    For \eqref{f28}, start from 
       \be\label{f26b}
n^\alpha  \PP\big(S_n^\lambda(\alpha,\theta) =\ell;K_n(\alpha,\theta)= \lf xn^\alpha\rf\big)
    =
    \PP\big(S_n^\lambda(\alpha,\theta) =\ell\big| K_n(\alpha,\theta) = \lf xn^\alpha\rf\big)
    \times
 n^\alpha \PP\big(K_n(\alpha,\theta)= \lf xn^\alpha\rf\big),
 \ee
and multiply \eqref{f24} and the RHS of \eqref{3b} together to get the limit of the LHS of \eqref{f26b} equal to 
 \be\label{26d}
  \frac{1}{2\pi  f_{Y_x^{(0)}}(1)}\int _{-\infty}^\infty e^{-\rmi \tau}
  \EE(e^{\rmi \tau Y_x^{(0)}})
e^{- g(x, \rmi\tau,\lambda, 1)}
\frac{ \big( g(x, \rmi\tau,\lambda, 1)\big)^\ell}{\ell!} \rmd \tau
\times
  \frac{x^{\theta/\alpha-1}
\Gamma(\theta)}{\Gamma(\theta/\alpha)}
e^{-x/\Gamma(1-\alpha)}  f_{Y_x^{(0)}}(1).
 \ee
Cancel the factors $f_{Y_x^{(0)}}(1)$ and use \eqref{f26g}  with $\mu=1$,  $G(\lambda)=G(\lambda,1)$, $H(\lambda)=H(\lambda,1)$ 
 and 
$c(\lambda)=c(\lambda,1)=(\lambda^{-\alpha}-1)/\Gamma(1-\alpha)$, to write \eqref{26d} as
 \be\label{27}
 \frac{ (xc(\lambda))^\ell x^{\theta/\alpha-1}\Gamma(\theta) e^{-x/\Gamma(1-\alpha)}  }{2\pi\ell!\Gamma(\theta/\alpha)}
\int _{-\infty}^\infty e^{-\rmi \tau}
\EE(e^{\rmi \tau Y_x^{(0)}})
e^{- g(x, \rmi\tau,\lambda, 1)}
 \EE(e^{\rmi \tau H_\ell(\lambda)})
\rmd \tau.
 \ee
 Substituting from \eqref{f13} and \eqref{f17a} gives
  \bea\label{28}
  \EE(e^{\rmi \tau Y_x^{(0)}})
e^{- g(x, \rmi\tau,\lambda, 1)}
&=&
    \exp\Big(\frac{x\alpha}{\Gamma(1-\alpha)}
    \Big[
 \int_0^1  (e^{ \rmi\tau  y} -1 ) 
 -
  \int_\lambda^1  e^{ \rmi\tau  y}\Big]
 y^{-\alpha-1} \rmd y\Big)\cr
 &=&
     \exp\Big(\frac{\alpha x}{\Gamma(1-\alpha)}
 \int_0^\lambda  (e^{ \rmi\tau  y} -1 ) 
 y^{-\alpha-1} \rmd y\Big)
 \times
  \exp\Big(-\frac{x(\lambda^{-\alpha}-1)}{\Gamma(1-\alpha)}\Big)\cr
  &=&
    \exp\Big(\frac{x}{\Gamma(1-\alpha)}\Big)
      \exp\Big(-\frac{x\lambda^{-\alpha}}{\Gamma(1-\alpha)}\Big)
 \exp\Big(\frac{\alpha x}{\Gamma(1-\alpha)}
 \int_0^\lambda  (e^{ \rmi\tau  y} -1 ) 
 y^{-\alpha-1} \rmd y\Big).\qquad \
  \eea
Substitute this  in \eqref{27} and integrate over $0<x<\infty$, changing variable from $x/\Gamma(1-\alpha)$ to $x$, to get an expression for the required marginal pdf as
  \bea\label{29}
  &&
 \frac{(c(\lambda))^\ell\Gamma(\theta) 
 (\Gamma(1-\alpha))^{\theta/\alpha+\ell}   }{\Gamma(\theta/\alpha)2\pi\ell!}
 \int_0^\infty 
 x^{\theta/\alpha+\ell-1}  e^{-x\lambda^{-\alpha}}\times
 \cr
 &&
 \hskip 3cm 
 \times
\int _{-\infty}^\infty e^{-\rmi \tau}
    \exp\Big(\alpha x
 \int_0^\lambda  (e^{ \rmi\tau  y} -1 ) 
 y^{-\alpha-1} \rmd y\Big)
  \EE(e^{\rmi \tau H_\ell(\lambda)})\rmd x 
\rmd \tau.
 \eea
 The inner integral is absolutely convergent because the function
  \bea\label{a2}
 \Big|  \exp\Big(\alpha x
 \int_0^\lambda  (e^{ \rmi\tau  y} -1 ) 
 y^{-\alpha-1} \rmd y\Big)\Big|
 &=&
 \exp\Big(-\alpha x
  \int_{0}^\lambda
 (1-\cos(\tau  y) )y^{-\alpha-1} \rmd y\Big)\cr
 &=&
  \exp\Big(-\alpha x|\tau|^{\alpha}
  \int_{0}^{|\tau|}(1-\cos y)y^{-\alpha-1} \rmd y\Big)
 \eea
is absolutely integrable for $\tau\in\R$. 
 The integral  (divided by $2\pi$) equals
 \be\label{30a}
 \frac{1}{2\pi} \int _{-\infty}^\infty e^{-\rmi \tau}
 \EE(e^{\rmi \tau Y_x^{(\alpha,\lambda)}})
  \EE(e^{\rmi \tau H_\ell(\lambda)})
  \rmd \tau
  =
  f_{Y_x(\alpha,\lambda)+ H_\ell(\lambda)}(1)\rmd x,
 \ee
where $ Y_x^{(\alpha,\lambda)}$ is the subordinator in  \eqref{f28}. 
 Substituting \eqref{30a} in \eqref{29} we get \eqref{f28}. 
 
   It remains to show that the RHS of \eqref{f28} defines a proper probability mass function. We defer this demonstration till after the proofs of the lemmas in Section \ref{app}.
 \halmos

 \bigskip \noindent {\bf Remarks.}\  (i)\
 \eqref{f28} expresses the limiting distribution as a real variable integral.
 An alternative representation as a complex integral is useful too.
Let
 \be\label{hdef}
 h(\alpha,\tau,\lambda)
 =
 \lambda^{-\alpha}-\alpha 
\int_0^\lambda \big(e^{ \rmi\tau  y}-1\big) y^{-\alpha-1} \rmd y,
\ {\rm  for}\   \lambda\in(0,1), \tau\in\R,
 \ee
 and write the RHS of \eqref{28} as 
 $e^{x/\Gamma(1-\alpha)}   e^{-x h(\alpha,\tau,\lambda)/\Gamma(1-\alpha)}$.
 Substitute this in \eqref{27} and  replace the term 
 $(c(\lambda))^\ell \EE(e^{\rmi \tau H_\ell(\lambda)})$ 
 (see  \eqref{f17} and \eqref{f26g}) 
 with 
 $\big(\alpha \int_\lambda^1 e^{ \rmi\tau  y} y^{-\alpha-1} \rmd y/\Gamma(1-\alpha)\big)^\ell$.  
Then change variable from $x/\Gamma(1-\alpha)$ to $x$  in \eqref{27}, 
 and change the order of integration, to obtain the following alternative to \eqref{f28}:
 \bea\label{51}
 &&  
 \lim_{n\to\infty} \PP\big(S_n^\lambda(\alpha,\theta) =\ell\big)\cr
 &&
 =
 \frac{ \Gamma(\theta)
( \Gamma(1-\alpha))^{\theta/\alpha}}
 { \Gamma(\theta/\alpha)2\pi\ell!}
 \int _{-\infty}^\infty  e^{-\rmi \tau}
 \int_0^\infty 
 \Big(\alpha \int_\lambda^1 e^{ \rmi\tau  y} y^{-\alpha-1} \rmd y\Big)^\ell
x^{\theta/\alpha+\ell-1}
e^{-x h(\alpha,\tau,\lambda)}
\rmd x
\rmd \tau\cr
&&
= 
\frac{\Gamma(\theta)
\Gamma(\theta/\alpha+\ell)
(\Gamma(1-\alpha))^{\theta/\alpha}}
{ \Gamma(\theta/\alpha) 2\pi\ell!}
 \int _{-\infty}^\infty e^{-\rmi \tau}  
  \Big(\alpha \int_\lambda^1 e^{ \rmi\tau  y} y^{-\alpha-1} \rmd y\Big)^\ell
 \frac{\rmd \tau}
{ (h(\alpha,\tau,\lambda))^{\theta/\alpha+\ell}}.
 \eea
 
 (ii)\
 Using \eqref{f13} and \eqref{hdef} replace \eqref{f29} with 
           \bea\label{15}
&& 1=
 \frac{\Gamma(\theta)}{ \Gamma(\theta/\alpha)2\pi}
 \int_0^\infty 
x^{\theta/\alpha-1}e^{-x/\Gamma(1-\alpha)} 
 \int _{-\infty}^\infty  e^{-\rmi \tau}
\exp\Big(  \frac{x\alpha}{\Gamma(1-\alpha)}
\int_0^1(e^{\rmi\tau y}-1)y^{-\alpha-1}\rmd y
\Big)
\rmd \tau
\rmd x\cr
&=&
 \frac{\Gamma(\theta)}{ \Gamma(\theta/\alpha)2\pi}
 \int_0^\infty 
x^{\theta/\alpha-1}
 \int _{-\infty}^\infty  e^{-\rmi \tau}
 e^{-xh(\alpha,\tau,1)/\Gamma(1-\alpha)} 
\rmd \tau
\rmd x.
\eea
Change variable from $x/\Gamma(1-\alpha)$ to $x$ and interchange the order of integrations to get the identity  
\be\label{15b}
1=
 \frac{\Gamma(\theta) (\Gamma(1-\alpha))^{\theta/\alpha}}{\Gamma(\theta/\alpha) 2\pi}
 \int _{-\infty}^\infty  e^{-\rmi \tau}
  \int_0^\infty 
x^{\theta/\alpha-1}
 e^{-xh(\alpha,\tau,1)} \rmd x
\rmd \tau
=
 \frac{\Gamma(\theta) (\Gamma(1-\alpha))^{\theta/\alpha}}{ 2\pi}
  \int _{-\infty}^\infty  
 \frac{e^{-\rmi \tau} \rmd \tau}
{(h(\alpha,\tau,1))^{\theta/\alpha}}, 
\ee
where $h(\alpha,\tau,1)
=1-\alpha\int_0^1(e^{\rmi\tau y}-1)y^{-\alpha-1}\rmd y$.

There's an interesting connection between \eqref{15b} and a  formula due to Kevei and Mason \cite{KeveiMason2014}.
They show that, when $\theta/\alpha$ is an integer $\ge 1$, 
\be\label{Darl}
\frac{e^{\rmi \tau}}
{
\big(1-\alpha\int_0^1(e^{\rmi\tau y}-1)y^{-\alpha-1}\rmd y\big)^{\theta/\alpha}}
\ee
is the  characteristic function 
of the limiting distribution  at 0 or $\infty$  of the ratio of a trimmed subordinator 
whose L\'evy tail function is regularly varying with index $-\alpha\in (-1,0)$,  to its largest jump.
See also \cite{IKM2018} and \cite{PY1997}, and 
Darling \cite{Darling1952} 
for a random walk version.
 Thus the integral in \eqref{15b} (divided by $2\pi$) is the value at 2 of the density of the limiting random variable.
    
\section{Appendix:\ Proofs of Lemmas}\label{app}\
  This Appendix 
  contains the proofs of Lemmas  \ref{ylem}, \ref{vcon}, 
  \ref{blem} and a useful technical Lemma \ref{LMu}.
We also give a quick derivation of Theorem 3 of  \cite{MallerShemehsavar2025} and
do some double-checking for  the correctness of some of the  formulae.

 \smallskip\noindent{\bf Proof of Lemma \ref{ylem}:}\ 
 To prove \eqref{6.5a} we  verify the Kallenberg conditions \eqref{210}--\eqref{10b0}  for  the normed sum in \eqref{6.5a}.
Recall \eqref{HM8} and note that $\PP(0<\wh X_{1n}^{(\nu f)}\le n)=1$.
(Again we just write $f$ for $\bff_n$.)
Choose $h\in(0,1)$ and $n$ so large that $\lf hn\rf\ge 1$.
For \eqref{210} consider 
\be\label{21}
  xn^\alpha
\PP\big(\wh X_{1n}^{(\nu f)}\ge \lf hn\rf \big) 
=
\frac{xn^\alpha \sum_{j=\lf hn\rf}^n q_{j}e^{ \nu  f(j/n)} }
{\sum_{j=1}^n q_{j} e^{ \nu  f(j/n)}}
\sim
\frac{x\alpha}{\Gamma(1-\alpha)}
\frac{n^\alpha \sum_{j=\lf hn\rf}^n j^{-\alpha-1} 
e^{ \nu  f(j/n)} }
{\sum_{j=1}^n q_j e^{ \nu  f(j/n)}}.  \quad
\ee
Using the boundedness of $f$ on $[0,1]$ and dominated convergence we see that the sum in the  denominator in  \eqref{21} converges to 1.
The numerator in \eqref{21} has the finite limit
\ben
\lim_{n\to\infty}
n^\alpha \sum_{j=\lf hn\rf}^n j^{-\alpha-1} 
e^{ \nu  f(j/n)} 
=
\int_h^1 e^{\nu  f(y)} y^{-\alpha-1} \rmd y.
\een
To see this, write the lefthand side as 
\ben
\sum_{j=\lf hn\rf}^n \int_{(j-1)/n} ^ {j/n}
(j/n)^{-\alpha-1} e^{ \nu  f(j/n)} \rmd y
=
 \int_{(\lf hn\rf-1)/n} ^ {1}
(\lc ny\rc/n)^{-\alpha-1} e^{ \nu  f(\lc ny\rc/n)} \rmd y
\een
By dominated convergence the righthand integral converges as $n\to\infty$ to 
   $\int_h^1 e^{\nu  f(y)} y^{-\alpha-1} \rmd y$
  for any bounded function $f$ for which 
  $\lim_{n\to\infty} f(\lc ny\rc/n) =f(y)$ almost everywhere; for example, for any $f$ which is continuous almost everywhere on $[0,1]$, thus, for any $f$ of bounded variation on $[0,1]$. 
So the limit of \eqref{21} for such functions  is
\be\label{LMX}
\frac{x\alpha  }{\Gamma(1-\alpha)}
\int_h^1 e^{\nu  f(y)} y^{-\alpha-1} \rmd y
=x
\int_h^1 \mu^{(\nu f)}(\rmd y),
\ {\rm with}\ 
 \mu^{(\nu f)}(\rmd y)= \frac{\alpha}{\Gamma(1-\alpha)}
  e^{\nu  f(y)} y^{-\alpha-1}{\bf 1}_{\{0<y\le 1\}} \rmd y.
\ee
For \eqref{10a0} we look at
  \bea\label{10a}
  &&
  x n^{\alpha-1}
\EE\Big( \wh X_{1n}^{(\nu f)} {\bf 1}_{\{\wh X_{1n}^{(\nu f)}<hn\}}\Big)
\sim
\frac{x\alpha}{\Gamma(1-\alpha)}
\frac{n^{\alpha-1}
 \sum_{j=1}^{\lf hn\rf} j^{-\alpha} e^{ \nu  f(j/n)} }
{\sum_{j=1}^n q_je^{ \nu  f(j/n)}}.
\eea
The sum in the denominator converges to $1$.
For the numerator, again approximate the sum by an integral and get  
the limit of \eqref{10a} as the finite expression
\be\label{10d}
\frac{x\alpha }{\Gamma(1-\alpha)}
\int_0^h e^{\nu  f(y)} y^{-\alpha} \rmd y
=
x\int_0^h y\mu^{(\nu f)}(\rmd y).
\ee
For \eqref{10b0} we get in a similar way
  \be\label{10b}
\lim_{n\to\infty}
x n^{\alpha-2}
\EE\Big(( \wh X_{1n}^{(\nu f)})^2 {\bf 1}_{\{\wh X_{1n}^{(\nu f)}<hn\}}\Big)
=
x\int_0^h y^2\mu^{(\nu f)}(\rmd y).
\ee

With these three conditions satisfied, 
 Corollary 15.16 of \cite{Kallenberg2002} gives \eqref{6.5a},
 where  $Y_x^{(\nu,f)}$ is  infinitely divisible with 
 triplet  $id(a,b,x\mu^{(\nu f)})$,  in Kallenberg's notation. Thus, noting that
 $a=0$ and $b=\int_{0<y<1} y\mu^{(\nu f)}(\rmd y)$, this distribution has characteristic exponent
 (Cor. 15.8, p.291, of \cite{Kallenberg2002})
 \begin{align}\label{10c}
 &
 x\Big(\rmi\tau  b -\tfrac{1}{2} \tau ^2 a 
 +
 \int_{\R\setminus\{0\}}
 (e^{ \rmi\tau  y} -1 -  \rmi\tau  y{\bf 1}_{\{0<y<1}) \mu^{(\nu f)}(\rmd y)\Big) \cr
&
=x
\int_{y>0}
 (e^{ \rmi\tau  y} -1 )\mu^{(\nu f)}(\rmd y)
 =
\frac{\alpha x}{\Gamma(1-\alpha)}
  \int_{0}^1
 (e^{ \rmi\tau  y} -1 ) e^{\nu  f(y)} y^{-\alpha-1} \rmd y, 
 \end{align}
 hence has the  characteristic function 
 in \eqref{ycf}.
  
Next we prove  \eqref{3.num1} using \eqref{3.num0}
 with 
$\phi_{\wh X_n}(\tau/n) 
= \EE\big(\exp(\rmi\tau   \wh X_{n}/n\big)$, $\tau\in\R$,
where we abbreviate 
 $ \wh X_{n}^{(\nu\bfu_n)}$
 to $ \wh X_{n}$. 
 We  use a method of \cite{GK1968} for the local limit theorem.
 As in p.233 of that book, 
 take $A>1$  and $\veps\in(0,1)$, choose $n>A/\veps$    and split the integral on the RHS of \eqref{3.num0}  into
  \be\label{a.3} 
\frac{1}{2\pi } \Big( \int_{|\tau|\le A} 
+ \int_{A<|\tau| \le \veps n} + 
\int_{ \veps n < |\tau| \le \pi n}\Big)
e^{-\rmi \tau}
\big( \phi_{\wh X_n}(\tau/n)\big) ^{ \lf xn^\alpha\rf} \rmd \tau.
 \ee
 By \eqref{6.5a},
$\big( \phi_{\wh X_n}(\tau/n)\big) ^{ \lf xn^\alpha\rf} \to \EE (e^{\rmi \tau Y_x^{(\nu,f)}}) $,
so
the first integral in \eqref{a.3} converges to the corresponding component of the integral in  \eqref{3.num1}.   
    The third integral in \eqref{a.3} is dealt with just as in  \cite{GK1968}, using the fact that $\wh X_n$ is a lattice 
   variable to get
   $ | \phi_{\wh X_n}(\tau/n)| \le e^{-C}$
   for a constant $C>0$, 
   hence
$   | \phi_{\wh X_n}(\tau/n)|^{ \lf xn^\alpha\rf} \le e^{-Cxn^\alpha}$ 
   for $\veps n<|\tau|\le \pi n$.
   See Cor. 2 to Thm. 5 in Section 14 of  \cite{GK1968}.
   Then
   \ben
   \Big| \int_{\veps n<|\tau| \le\pi n}
   e^{-\rmi \tau}
    \big( \phi_{\wh X_n}(\tau/n)\big) ^{ \lf xn^\alpha\rf} \rmd \tau\Big| 
   \le n e^{-Cx n^\alpha}
   \to 0, \ {\rm as}\ n\to\infty.
   \een

 For the second integral in  \eqref{a.3} we prove
  \be\label{limsup0x}
\lim_{A\to\infty} \limsup_{n\to\infty} 
\Big| \int_{A<|\tau| \le \veps n}e^{-\rmi\tau} \big( \phi_{\wh X_n}(\tau/n)\big) ^{ \lf xn^\alpha\rf} \rmd \tau\Big|  =0,
 \ee
 proceeding as follows. Keep $A<|\tau|\le \veps n$. 
 By \eqref{HM8N} we can write
 \ben
 \phi_{\wh X_n}(\tau/n) 
 =
 \frac{1}{D_n}
 \sum_{j=1}^n e^{\rmi \tau j/n}q_j e^{\nu f(j/n)}, 
 \ {\rm with} \
 D_n = \sum_{j=1}^n q_j e^{\nu f(j/n)},
 \een
 where the $q_j$ are given by \eqref{qdef}.
 Since $\sum_{j\ge 1} q_j=1$, $|f(j/n)|\le f_c<\infty$ and $\lim_{n\to\infty}f(j/n)=0$ for each $j>0$, we get 
 $\lim_{n\to\infty}D_n=1$ by  dominated convergence.
 We plan to use the inequality
  \be\label{inal}
 |\phi_{\wh X_n}(\tau/n)| ^{ \lf xn^\alpha\rf} 
=
 \exp\Big(
 \frac{\lf xn^\alpha}{2} \rf
 \log\big(
 1-(1-|\phi_{\wh X_n}(\tau/n) |^2)
 \big)
 \Big)
 \le
 \exp\Big( -\frac{\lf x n^\alpha\rf}{2}  
 \big(1- |\phi_{\wh X_n}(\tau/n)| ^2\big)
 \Big), 
 \ee
 which follows from $-\log(1-z)\ge z$ for $0<z<1$.
 In \eqref{inal},
 \bea\label{inb}
 1-|\phi_{\wh X_n}(\tau/n) |^2
&=&
 1-\EE^2(\cos(\tau\wh X_n/n))- \EE^2(\sin(\tau \wh X_n/n))\cr
 &=&
 \big(  1-\EE(\cos(\tau\wh X_n/n))\big)\big(  1+\EE(\cos(\tau\wh X_n/n))\big)- \EE^2(\sin(\tau \wh X_n/n)).
  \eea

To find a lower bound for 
 $ 1-\EE(\cos(\tau\wh X_n/n))$
 write it as 
  \bean
\EE(1-\cos(\tau\wh X_n/n))=  
 \frac{1}{D_n}
 \sum_{j=1}^n (1-\cos(\tau j/n) ) q_j e^{\nu  f(j/n)},
  \eean
  choose $\veps\in(0,1)$, and keep $1<|\tau|\le \veps n$. Since 
  $q_j \ge c_\alpha j^{-\alpha-1}$ for some
   $c_\alpha>0$   and all $j\ge 1$,  and $|f(j/n)|\le f_c$, 
   so $D_n\le  e^{|\nu|f_c}$,  we get 
    \ben
 \frac{1}{D_n}
 \sum_{j=1}^n (1-\cos(\tau j/n) ) q_j e^{\nu  f(j/n)}
\ge 
 c _\alpha  e^{-|\nu|f_c}\sum_{j=1}^{n/|\tau|} (1-\cos(\tau j/n) )
 j^{-\alpha-1}
  \ge 
   \frac{c_\alpha e^{-|\nu|f_c}\tau^2 }{4n^2} 
  \sum_{j=1}^{n/\tau|}  j^{1-\alpha}
=c _{\alpha,\nu}   \frac{|\tau|^\alpha}{n^{\alpha}}.
  \een
  
To find a lower bound for $1+\EE(\cos(\tau\wh X_n/n))$, write it as $2- \EE(1-\cos(\tau\wh X_n/n))$,  choose $B$ with $\veps B<1$, and estimate 
\bea\label{c1}
1-\EE(\cos(\tau\wh X_n/n))
&=&
 \frac{1}{D_n}
\Big(\sum_{j=1}^B + \sum_{j>B} \Big)(1-\cos(\tau j/n))q_j
e^{\nu  f(j/n)}.
\eea
In the first sum, $\tau j/n\le \tau B/n <\veps B<1$, so  that sum is $\le (1-\cos\veps B)\sum_{j=1}^B  q_je^{|\nu|f_c}\le (\veps B)^2e^{|\nu|f_c}$.
The second sum is $\le \sum_{j>B}q_je^{|\nu|f_c}\le B^{-\alpha}e^{|\nu|f_c}/\alpha$.
Choosing $B=\veps^{-1/4}$ gives an overall upper bound for \eqref{c1} of $(\veps^{3/2}+\veps ^{\alpha/4})e^{|\nu|f_c}=: \veps'_{\nu}$, hence 
a lower bound for $1+\EE(\cos(\tau\wh X_n/n))$ of $2-\veps'_{\nu}$.

 For the sine term in \eqref{inb}, using $|\sin z|\le z$ ($z>0$) and 
   $q_j \le C_\alpha j^{-\alpha-1}$ for some    $C_\alpha>0$, $j\ge 1$,    we get 
  \bean
|\EE(\sin(\tau \wh X_n/n))|
  &\le&
 e^{|\nu|} \sum_{j=1}^n |\sin(\tau j/n)|q_j 
  \le 
  C_\alpha  e^{|\nu|f_c} 
  \Big(\frac{|\tau|}{n} \sum_{j=1}^{n/|\tau|}  j^{-\alpha}
  +
\sum_{j\ge n/|\tau|}j^{-1-\alpha} \Big) 
=C _{\alpha,\nu}   \frac{|\tau|^\alpha}{n^{\alpha}}.
  \eean
Since $|\tau|\le \veps n$, for another constant $\wtilde C _{\alpha,\nu}$ 
we have  $ \EE^2(\sin(\tau \wh X_n/n))
\le\wtilde C _{\alpha,\nu} \veps^\alpha |\tau|^\alpha n^{-\alpha}$, 
 so, putting all parts together,
 we have a lower bound for the RHS of \eqref{inb} of the form  $c(\alpha,\nu)  |\tau|^\alpha n^{-\alpha}$, where 
 $c(\alpha,\nu)>0$,
and consequently an upper  bound for the RHS of \eqref{inal} of
$   |\phi_{\wh X_n}(\tau/n)| ^{ \lf xn^\alpha\rf} 
   \le 
e^{-xc(\alpha,\nu) |\tau|^\alpha}$. 
 \eqref{limsup0x} follows and this 
 shows that the second  integral in \eqref{a.3}  can be neglected for this model.

 We show finally that the integral  
 $\int_{-\infty}^{\infty} e^{-\rmi \tau}
 \EE (e^{\rmi\tau Y_x^{(\nu,f)}}) \rmd \tau$  in \eqref{3.n} is absolutely convergent for all $x>0$.
In fact this is the case with $e^{-\rmi \tau}$ replaced by $e^{-\rmi \tau z}$, for any $z\in\R$, so  each $Y_x^{(\nu,f)}$ has a continuous density, $f_{Y_x^{(\nu,f)}}(z)$, for $z\in\R$.
  To prove it,  choose $|\tau|>1$
   and write,   with  $c(\alpha,\nu)>0$,
 \bea\label{a1}
 |\EE(e^{\rmi \tau Y_x^{(\nu,f)}})|
 &=&
 \exp\Big(-\frac{\alpha x}{\Gamma(1-\alpha)}
  \int_{0}^1
 (1-\cos(\tau  y) ) e^{\nu  f(y)} y^{-\alpha-1} \rmd y\Big)\cr
 &=&
  \exp\Big(-\frac{\alpha x|\tau|^{\alpha}}{\Gamma(1-\alpha)}
  \int_{0}^{|\tau|}(1-\cos y)
 e^{\nu f(y/|\tau|)} y^{-\alpha-1} \rmd y\Big)\cr
 &\le&
  \exp\Big(-\frac{\alpha x |\tau|^{\alpha}e^{-|\nu|f_c }}
  {\Gamma(1-\alpha)} \int_{0}^{1}  (1-\cos y) y^{-\alpha-1} \rmd y\Big) 
=
e^{-c(\alpha,\nu)x|\tau|^\alpha}.
  \eea
Thus the LHS is bounded by an integrable function of $\tau$ and we have the absolute convergence. This completes the proof of Lemma \ref{ylem}.
\halmos

 \medskip\noindent{\bf Proof of Lemma \ref{vcon}.}\ 
 Recalling \eqref{dvN2} and   \eqref{qdef}, we have 
  \be\label{dv2N}
   \PP\Big(V_{1n}=u_{jn}=f(j/n)\Big) 
=
   q_{jn}
   =
   \frac{\alpha \Gamma(j-\alpha)}{q_{+n}\Gamma(1-\alpha)j!},\ 1\le j\le n,
     \ee
where we now omit the superscript $\bfu_n$ on $V_{1n}^{(\bfu_n)}$. 
 In \eqref{HM8} and following we assumed
  $b_n= k_n$.
 The mgf of the expression in \eqref{nsgN}, in which we can omit
 $b_n/k_n$ and $E(V_{1n})$, 
  is then
 \bea\label{mgd0}
 &&
 \big( \EE(e ^{\nu  V_{1n}})\big)^{\lf x n^\alpha\rf}
=
   \exp\Big(\lf x n^\alpha\rf
  \Big(\log\big(1+\sum_{j=1}^n q_{jn} (e ^{\nu  f(j/n) }
  -1)\big) \Big)\Big)\cr
  &&=
     \exp\Big(\lf x n^\alpha\rf
  \Big(\sum_{j=1}^n q_{jn} (e ^{\nu  f(j/n) }  -1)
  +O\big(R_n^2\big)\Big)\Big),
  \eea
  where 
  \ben
  R_n = \sum_{j=1}^n q_{jn} (e ^{\nu | f(j/n)| }  -1)
  =O\Big( \sum_{j=1}^n q_{jn}|f(j/n)| \Big)
  =O\Big(n^{-\alpha} \int_0^1 y^{-\alpha-1} |f(y)| \rmd y\Big).
  \een   
Letting $n\to\infty$ and approximating the sum by an integral, 
the RHS of \eqref{mgd0}  has limit
equal to  the expression in \eqref{vfin}. 
This completes the proof of Lemma \ref{vcon}.
\halmos
     
     \smallskip\noindent{\bf Proof of Lemma \ref{blem}.}\      
To see that the limit in \eqref{f29} equals 1, note that, for $K>0$,
\bea\label{13}
\PP(K_n(\alpha,\theta)\le Kn^\alpha)
&=&
 \sum_{k=1}^{\lf  Kn^\alpha \rf}\PP(K_n(\alpha,\theta)=k)
 =
 n^\alpha  \sum_{k=1}^{\lf  Kn^\alpha \rf}
\int_{k/n^\alpha} ^{(k+1)/n^\alpha} \PP(K_n(\alpha,\theta)=\lf xn^\alpha\rf  )\rmd x \cr
&=&
n^\alpha \int_{1/n^\alpha} ^{K+1/n^\alpha} \PP(K_n(\alpha,\theta)=\lf xn^\alpha\rf  ) \rmd x.
\eea
Since $K_n(\alpha,\theta)/n^\alpha$ converges to a proper
(Mittag-Leffler) rv, the 
limit of the LHS of \eqref{13} as $n\to\infty$ then $K\to\infty$ is 1, and so the same is true of the limit of  the RHS, and this is the same limit found in \eqref{f29}. 
For the remainder of \eqref{f29},    
we  deduce  from \eqref{3b}, and Fatou's lemma,
           \bea\label{15n}
&&  \lim_{n\to\infty}
n^\alpha\int_0^\infty  \PP\big(K_n(\alpha,\theta)= \lf xn^\alpha\rf  \big)\rmd x
\ge 
 \frac{\alpha\Gamma(\theta)}{\Gamma(\theta/\alpha)}
\int_0^\infty   x^{\theta/\alpha}g_\alpha(x)
\rmd x.
     \eea
     Here the LHS equals 1 as just shown -- but also  the RHS equals 1 because the $ x^{\theta/\alpha}$--moment of the Mittag-Leffler  rv is precisely equal to 
     $\Gamma(\theta/\alpha)/\alpha\Gamma(\theta)$
     (e.g., \cite{Pitman2006}, p.68).
     Thus equality holds in \eqref{15n}.
     Substituting on the right of this equality for the $g_\alpha(x)$ term in \eqref{3b} and 
recalling $ \EE(e^{\rmi \tau Y_x^{(0)}})$ in \eqref{f13} 
     gives  \eqref{f29}.
     This completes the proof of Lemma \ref{blem}.
      \halmos.  

\medskip\noindent{\bf Completion of the Proof of Theorem \ref{4para}:}\ 
It remains to show that the RHS of \eqref{f28} defines a proper probability mass function.
The identity
\be\label{zid}
\sum_{\ell=0}^\infty \frac{\Gamma(\theta/\alpha +\ell)}{\ell!} z^\ell
= \frac{\Gamma(\theta/\alpha)}{(1-z)^{\theta/\alpha}}, \ {\rm for}\ |z|<1, \theta>0, \alpha>0,
\ee
is easily proved. From \eqref{51} write 
\be\label{21a}
      \lim_{n\to\infty}     \PP\big(S_n^\lambda(\alpha,\theta) =\ell\big)
=
\frac{\Gamma(\theta)\Gamma(\theta/\alpha+\ell) (\Gamma(1-\alpha))^{\theta/\alpha}}
{ \Gamma(\theta/\alpha)2\pi \ell! }
\int _{-\infty}^\infty
 \frac{ e^{-\rmi \tau}
z^\ell}{(h(\alpha,\tau,\lambda))^{\theta/\alpha}} \rmd \tau,
 \ee
 where (recalling  \eqref{hdef})
 \be\label{zdef}
z=
\frac{\alpha \int_\lambda^1 e^{ \rmi\tau  y} y^{-\alpha-1} \rmd y}
{h(\alpha,\tau,\lambda)}
=
\frac{\alpha \int_\lambda^1 (e^{ \rmi\tau  y}-1) y^{-\alpha-1} \rmd y+\lambda^{-\alpha}-1}
{ \lambda^{-\alpha}-\alpha 
\int_0^\lambda \big(e^{ \rmi\tau  y}-1\big) y^{-\alpha-1} \rmd y}.
 \ee
 Then we can calculate
  \be\label{zD}
1-z
=\frac{ 1-\alpha 
\int_0^1 \big(e^{ \rmi\tau  y}-1\big) y^{-\alpha-1} \rmd y}
{ \lambda^{-\alpha}-\alpha 
\int_0^\lambda \big(e^{ \rmi\tau  y}-1\big) y^{-\alpha-1} \rmd y}
=\frac{ 1-\alpha 
\int_0^1 \big(e^{ \rmi\tau  y}-1\big) y^{-\alpha-1} \rmd y}
{h(\alpha,\tau,\lambda)}
=\frac{ h(\alpha,\tau,1)}
{h(\alpha,\tau,\lambda)}.
\ee
 Add over $\ell$ in \eqref{21a} using \eqref{zid} to get, for the RHS,
 \be\label{22}
 \frac{\Gamma(\theta)(\Gamma(1-\alpha))^{\theta/\alpha}}
{ 2\pi}
\int _{-\infty}^\infty  e^{-\rmi \tau} 
\frac{\rmd \tau}
{ \big(1-\alpha 
\int_0^1 \big(e^{ \rmi\tau  y}-1\big) y^{-\alpha-1} \rmd y\big)^{\theta/\alpha}}.
 \ee
By \eqref{15b}   this equals 1.
\halmos

\medskip
\noindent{\bf A Basic Lemma.}\
The  following lemma summarises  a method used in    \cite{IpsenMallerShemehsavar2021}, p.375,   
\cite{MallerShemehsavar2025}, and herein.
 
\begin{lemma}\label{LMu}
  Let $J,k,n, (m_i)_{1\le i\le n}$ be integers
  with $0\le J< n$, $1\le k\le n$, and 
  $0\le m_i\le n$, $1\le i\le n$.
Let $m_{+}^{(J)}= \sum_{j=1}^Jm_j$ and
$m_{++}^{(J)}= \sum_{j=1}^Jjm_j$, with
$m_{+}^{(0)}=m_{++}^{(0)}=0$, 
$m_{+}=m_{+}^{(n)}=k$, $m_{++}=m_{++}^{(n)}=n$,
$k^{(J)}=k-m_{+}^{(J)}$ and 
$n^{(J)}=n-m_{++}^{(J)}$.
Let  $\mu_j$, $J+1\le j\le n$,  be positive constants with
$\mu_+^{(J)}=\sum_{j=J+1}^n \mu_j$.
Let $\bfm^{(J)}= (m_{J+1},\ldots,m_n)$ and 
  \ben
  A_{kn}^{(J)}=\Big\{m_j\ge 0, J+1\le j\le n: \,
 \sum_{j=J+1}^nm_j= k^{(J)},\, 
\sum_{j=J+1}^n jm_j=n- m_{++}^{(J)}\Big\},
\een
and let 
 $\big(X_{in}^{(J)}\big)_{1\le i\le  k^{(J)}}$ be i.i.d. with
\be\label{LM2}
\PP\big(X_{1n}^{(J)}=j\big) =
\frac{\mu_j}{\mu_+^{(J)}},\ J+1\le j\le n.
\ee
Then 
\be\label{LM1}
\sum_{\bfm^{(J)}\in A_{kn}^{(J)}}
\prod_{j=J+1}^n \frac{ \mu_j^{m_j}}{m_j!}
=
\frac{\big(\mu_+^{(J)} \big)^{ k^{(J)}}}
{k^{(J)}!}
 \PP\Big(\sum_{i=1}^{ k^{(J)}} X_{in}^{(J)}= n^{(J)}\Big).
\ee
\end{lemma}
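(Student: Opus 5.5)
The plan is a generating-function argument: collapse both sides of \eqref{LM1} to the coefficient of $s^{n^{(J)}}$ in a single polynomial. Write $[s^N]P(s)$ for the coefficient of $s^N$ in a polynomial $P$.

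\emph{Step 1 (left-hand side).} Introduce two formal variables, $t$ to count blocks and $s$ to count total size. By the exponential series,
\begin{equation*}
\prod_{j=J+1}^n \exp(t\mu_j s^j)=\prod_{j=J+1}^n\sum_{m_j\ge0}\frac{(t\mu_j s^j)^{m_j}}{m_j!}.
\end{equation*}
Expanding the product, the coefficient of $t^{k^{(J)}}s^{n^{(J)}}$ is exactly the sum over $\bfm^{(J)}$ with $\sum_{j>J} m_j=k^{(J)}$ and $\sum_{j>J} jm_j=n^{(J)}$. That constraint set is precisely $A_{kn}^{(J)}$, since $n^{(J)}=n-m_{++}^{(J)}$. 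Hence the left-hand side of \eqref{LM1} equals this coefficient.

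\emph{Step 2 (extract $t$).} The product also equals $\exp\bigl(t\sum_{j>J}\mu_j s^j\bigr)$. Reading off the coefficient of $t^{k^{(J)}}$ gives
\begin{equation*}
\frac{1}{k^{(J)}!}\Bigl(\sum_{j=J+1}^n \mu_j s^j\Bigr)^{k^{(J)}}.
\end{equation*}
Factor out $\mu_+^{(J)}$ inside the bracket. What remains is $\sum_{j>J}\frac{\mu_j}{\mu_+^{(J)}}s^j=\EE\bigl(s^{X_{1n}^{(J)}}\bigr)$, the probability generating function of the law \eqref{LM2}.

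\emph{Step 3 (right-hand side).} By independence,
\begin{equation*}
\bigl(\EE(s^{X_{1n}^{(J)}})\bigr)^{k^{(J)}}=\EE\bigl(s^{\sum_{i\le k^{(J)}}X_{in}^{(J)}}\bigr).
\end{equation*}
The coefficient of $s^{n^{(J)}}$ in this pgf is $\PP\bigl(\sum_i X_{in}^{(J)}=n^{(J)}\bigr)$. Combining Steps 1--3, the left-hand side of \eqref{LM1} is $\bigl(\mu_+^{(J)}\bigr)^{k^{(J)}}/k^{(J)}!$ times this probability, which is \eqref{LM1}.

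\emph{Alternative route.} One can argue combinatorially instead. Count ordered sequences $(x_1,\dots,x_{k^{(J)}})$ with values in $\{J+1,\dots,n\}$ and sum $n^{(J)}$, weighted by $\prod_i\mu_{x_i}$. Grouping sequences by their multiplicity vector $\bfm^{(J)}$ gives multinomial factors $k^{(J)}!/\prod_j m_j!$, and dividing the total by $(\mu_+^{(J)})^{k^{(J)}}$ turns it into the probability on the right.

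\emph{Degenerate cases.} No step is a real obstacle; the only care needed is here. If $k^{(J)}=0$, both sides equal the indicator that $n^{(J)}=0$, since the empty sum is $0$ and $0!=1$. If $k^{(J)}<0$ or $n^{(J)}<0$, both sides vanish.
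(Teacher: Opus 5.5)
Your proof is correct, and it reaches \eqref{LM1} by a more direct route than the paper.

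The paper's route is probabilistic. It introduces independent $N_j$, Poisson with mean $\mu_j$, and identifies the left side of \eqref{LM1} times $e^{-\mu_+^{(J)}}$ as $\PP\bigl(\sum_{j>J} jN_j=n^{(J)},\ \sum_{j>J}N_j=k^{(J)}\bigr)$. It divides out the Poisson probability of $\{\sum_{j>J} N_j=k^{(J)}\}$. It then uses that, given this total, $(N_j)_{j>J}$ is multinomial with cell probabilities $\mu_j/\mu_+^{(J)}$. That multinomial's mgf at $\nu_j=\nu j$ coincides with the mgf of $\sum_{i\le k^{(J)}}X_{in}^{(J)}$.

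Your bivariate series $\exp\bigl(t\sum_{j>J}\mu_js^j\bigr)$ is, up to the factor $e^{-\mu_+^{(J)}}$, exactly the joint pgf of $\bigl(\sum_{j>J}N_j,\sum_{j>J}jN_j\bigr)$. So extracting the coefficient of $t^{k^{(J)}}$ does the work of the Poisson normalisation and the conditioning. Your Step 3 is the paper's multinomial/mgf identification, written as a pgf identity.

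Your version gains brevity and transparency. It needs no conditioning and no appeal to uniqueness of mgfs. It also makes evident that the underlying statement is purely algebraic: the left side equals the coefficient of $s^{n^{(J)}}$ in $\bigl(\sum_{j>J}\mu_js^j\bigr)^{k^{(J)}}/k^{(J)}!$.

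The paper's version buys a probabilistic picture: Poisson counts conditioned on the number of blocks $\sum_j N_j$, rather than on $\sum_j jN_j$ as in the Arratia--Barbour--Tavar\'e conditioning relation. The authors use this picture to motivate their local limit approach.

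Your combinatorial alternative is essentially the paper's multinomial step with the Poisson layer removed.
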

  
 \noindent{\bf  Proof of Lemma \ref{LMu}:}\
   Let $N_j$ be independent Poisson $(\mu_j)$ random variables, $J+1\le j\le n$, so that
  \be\label{LJ1}
  \PP(N_j=m_j, J+1\le j\le n)=
  \prod_{j=J+1}^n \frac{ \mu_j^{m_j}e^{-\mu_j}}{m_j!}.
  \ee
  Adding the RHS of \eqref{LJ1} over the integers in $A_{kn}^{(J)}$ we get
  \bea\label{LJ2}
&&  \sum_{\bfm^{(J)}\in A_{kn}^{(J)}}
\prod_{j=J+1}^n \frac{ \mu_j^{m_j}e^{-\mu_j}}{m_j!}
=
 \PP\Big(\sum_{j=J+1}^n jN_j= n^{(J)},
 \sum_{j=J+1}^n N_j= k^{(J)}\Big)\cr&&\cr
 &=&
  \PP\Big(\sum_{j=J+1}^n jN_j= n^{(J)}\Big|
 \sum_{j=J+1}^n N_j= k^{(J)}\Big)
  \PP\Big( \sum_{j=J+1}^n N_j= k^{(J)} \Big).
  \eea
  Here $ \sum_{j=J+1}^n N_j$ is distributed as Poisson with mean $ \sum_{j=J+1}^n \mu_j=\mu_+^{(J)}$.
  Substituting  in \eqref{LJ2} for
  \ben
 \PP\Big( \sum_{j=J+1}^n N_j= k^{(J)} \Big)=
 \frac   {\big(  \mu_+^{(J)} \big)^{ k^{(J)}}} { k^{(J)}!}
     e^{-\mu_+^{(J)}}
  \een
   gives
    \be\label{LJ3}
  k^{(J)}! \sum_{\bfm^{(J)}\in A_{kn}^{(J)}}
\prod_{j=J+1}^n \frac{\mu_j^{m_j}}{m_j!}
=\big(  \mu_+^{(J)} \big)^{ k^{(J)}}
  \PP\Big(\sum_{j=J+1}^n jN_j= n^{(J)}\Big|
 \sum_{j=J+1}^n N_j= k^{(J)}\Big).
  \ee  
  Conditional on $ \sum_{j=J+1}^n N_j= k^{(J)}$,
  the vector 
  $(N_j, J+1\le j\le n)$ has the  distribution 
of  a multinomial  vector    $\bfMult(J, k^{(J)}, n, \bfp_n^{(J)})$ 
with elements  $(M_j)_{J+1\le j\le n}$ and mass function 
\be\label{M0}
\PP\big(\bfMult(J, k^{(J)}, n, \bfp_n^{(J)})=(m_{J+1},\ldots, m_n) \big)=
 k^{(J)}!
\prod_{j=J+1}^n
\frac{p_{j}^{m_j}}{m_j!},
\ee
  where $p_j=\mu_j/\mu_+^{(J)}$, $J+1\le j\le n$, and $\bfp_n^{(J)}=(p_{J+1}, \ldots, p_n)$.
  Thus, conditional on $ \sum_{j=J+1}^n N_j= k^{(J)}$,
   $\sum_{j=J+1}^n jN_j$ has the distribution of 
 $\sum_{j=J+1}^n jM_j$.
 The mgf of $\bfMult(J, k^{(J)}, n, \bfp_n^{(J)})$ is
 \ben
\EE\Big(\prod_{j=J+1}^n e^{\nu_jM_j}\Big)
 =
 \Big(\sum_{j=J+1}^np_j e^{\nu_j}\Big)^{ k^{(J)}}
 \een
  where $\nu_j\in\R$, $J+1\le j\le n$.
  Setting $\nu_j=\nu j$, $\nu\in\R$, gives
   \ben
 \EE\Big(\exp\Big(\nu \sum_{j=J+1}^n j M_j\Big)\Big)
 =
 \Big(\sum_{j=J+1}^np_j e^{\nu j}\Big)^{ k^{(J)}},
 \een
 and the RHS is 
the mgf of the sum of $ k^{(J)}$ independent rvs $X_{in}^{(J)}$  with the distribution in \eqref{LM2}.
 Consequently 
 $\sum_{j=J+1}^n jM_j \eqdr \sum_{i=1}^{ k^{(J)}} X_{in}^{(J)}$
 and by \eqref{LJ3} and \eqref{M0}
 \bean
   \PP\Big(\sum_{j=J+1}^n jN_j= n^{(J)}\Big|
 \sum_{j=J+1}^n N_j= k^{(J)}\Big)
&=&
\PP\Big(\sum_{j=J+1}^n jM_j= n^{(J)}\Big)=
\PP\Big( \sum_{i=1}^{ k^{(J)}} X_{in}^{(J)}= n^{(J)}\Big),
  \eean
  and so \eqref{LM1} is proved.
\halmos 
     
  \medskip\noindent {\bf Remarks.}\
 Lemma \ref{LMu} is a  key component of the proofs. 
     Comparing our methods with those of \cite{ABT2003}, a main difference is that we condition on sums of Poisson variables  like
     $\sum_j N_j$, whereas \cite{ABT2003} condition on sums like 
      $\sum_j jN_j$. Both approaches lead to local limit theorems, but not the same ones,  for sums of independent rvs.
       In particular expressions like our $f_{Y}(1)$ occur frequently in \cite{ABT2003}, and ratios of random walk probabilities such as in \eqref{ecenN} have been studied, though generally in a more restricted setting; see for example \cite{KestenRatioII1963} and his references where the proof techniques if not the results themselves may be applicable.
 
\bigskip\noindent{\bf Theorem 3 of \cite{MallerShemehsavar2025} follows from Theorem \ref{thm2N}:}\
Fix an integer $J$, $1\le J<n$,  let $\bfu_J=(u_1, u_2, \ldots,u_J)$ be an arbitrary vector in $\R^J$, and let $\bf0$ be an $n-J$ vector of zeroes.
Applying Theorem \ref{thm2N} with $\bfu_n^T=(\bfu_J^T\, \bf0^T)$ 
gives a formula for  the conditional  mgf of 
$\bfM_{Jn}:=(M_{1n}, \ldots, M_{Jn})$
in the form
  \be\label{ecenNX}
\EE\Big(
\exp\Big(\nu\bfu_J^T\Big(\frac{\bfM_{Jn}}{K_n}-\bfq_J\Big)\Big)\Big| K_n=k\Big)=
\EE\Big(\exp\Big(
  \frac{ \nu }{k} \sum_{i=1}^{k}
   \big(V_{in}^{(\bfu_n)}- \EE(V_{in}^{(\bfu_n)})\big)\Big)\Big)\times
\frac{\PP\Big(\sum_{i=1}^{k} \wh X_{in}^{(\nu\bfu_n)}
=n\Big)}
{ \PP\Big(\sum_{i=1}^{k} \wh X_{in}^{(0)}=n\Big)},
 \ee
 for  $\nu\in\R$ and $k\in \N_n$. 
 Set $k=k_n(x) =\lf xn^\alpha\rf$, $x>0$,  and $\nu=\nu_n=n^{\alpha/2}$.
 We  know then that $\lim_{n\to\infty} \PP(K_n=\lf xn^\alpha\rf)$ exists,  finite and positive.
Recall   $q_j = \alpha \Gamma(j-\alpha)/j!\Gamma(1-\alpha)$ and let  $\bfQ_J$ be  the $J\times J$ matrix  with 
diagonal elements $q_{j}(1-q_{j})$ and 
off-diagonal elements $-q_{j} q_{\ell }$, $1\le j\ne \ell \le J$.
  In this special case the 
  $(V_{in}^{(\bfu_n)})_{1\le i\le k}$ in \eqref{ecenNX} are i.i.d. with 
 \be\label{dvNX}
\begin{cases}
 P(V_{1n}^{(\bfu_n)}= u_{j})=q_{jn}, & \text{if } 1\le j\le J;\\
 P(V_{1n}^{(\bfu_n)}=0)=q_{jn},  & \text{if } J+1\le j\le n,
\end{cases}
\ee
 the 
 $\big(\wh X_{in}^{(\nu{\bfu_n}  )}\big)_{1\le i\le k}$   are i.i.d. with the distribution 
\be\label{HM8NnX}
\PP\big(\wh X_{1n}^{(\nu{\bfu_n}  )}=j\big) 
=
\begin{cases}
\displaystyle{\frac{q_{j}e^{ \nu u_{j}/k} }{D_n}} & \text{if } 1\le j\le J;\\
\displaystyle{\frac{q_{j} }
{D_n}}  & \text{if } J+1\le j\le n.
\end{cases}
\ee
where $D_n=\sum_{\ell=1}^J q_{\ell } e^{ \nu u_{\ell }/k}+
\sum_{\ell=J+1}^n q_{\ell }$,
and the 
 $\big(\wh X_{in}^{(0 )}\big)_{1\le i\le k}$ are i.i.d. with
\be\label{H0X}
\PP\big(\wh X_{1n}^{(0 )}=j\big) 
= q_{jn} =
\frac{q_{j} }
{\sum_{\ell=1}^n q_{\ell } },\ 1\le j\le n.
\ee
(Compare  \eqref{HM8Nn}, \eqref{H0}, \eqref{dvN}.) 
     The $V_{in}^{(\bfu_n)}$ have 
      $\EE(V_{1n}^{(\bfu_n)})= \bfu_J^T\bfq_J$
    and 
       ${\rm Var}(V_{1n}^{(\bfu_n)})= \bfu_J^T\bfQ_J\bfu_J$.
With $k=\lf xn^\alpha\rf$ and $\nu=n^{\alpha/2}$,
the exponent in the first  term on the RHS of \eqref{ecenNX} is 
\ben
\frac{n^{\alpha/2}}{\lf xn^\alpha\rf}
 \sum_{i=1}^{\lf xn^\alpha\rf}
   \big(V_{in}^{(\bfu_n)}- \EE(V_{in}^{(\bfu_n)})\big)
\een
where the sum is of  a triangular array of independent rvs.
The whole expression has mean 0 and  variance 
$x^{-1} \bfu_J^T\bfQ_J\bfu_J$ and is  easily seen to be asymptotically normal with these parameters.
Since $\bfu_J$ is arbitrary, using \eqref{dvNX}  this reproduces the result in Theorem 3 of \cite{MallerShemehsavar2025} 
(the conditional asymptotic normality of a finite number of the $(M_{jn}/K_n)_{1\le j\le J}$ after centering and an $n^{\alpha/2}$ norming) 
provided we show the ratio of probabilities  on the RHS of \eqref{ecenNX} has limit 1.
This follows because, for $h\in (0,1)$, it's easy to check that the limits in \eqref{LMX}, \eqref{10d} and \eqref{10c} hold as stated but with $f(y)$ set equal to 0. Thus \eqref{6.5a} holds with $Y_x^{(\nu,f)}$ replaced by 
 $Y_x^{(0)}$ and consequently \eqref{3.n} holds in the present case in the form 
    \be\label{3.nX}
    \lim_{n\to\infty}
n\PP\Big(\sum_{i=1}^{\lf xn^\alpha\rf} \wh X_{in}^{(\nu \bfu_n)}=n\Big)
= f_{Y_x^{(0)}} (1),
 \ee
where the RHS is finite and positive. Exactly the same  result holds with  $ \wh X_{in}^{(\nu \bfu_n)}$  replaced by 
$ \wh X_{in}^{(0)}$, and so  the ratio of probabilities  on the RHS of \eqref{ecenNX} indeed has limit 1.
This completes the demonstration. \halmos

\newpage
\bigskip\noindent{\bf Some Double Checking.}\

\smallskip\noindent{\bf (a) Check  
 \eqref{ecenN} when $\bfu_n= {\bf 1_n}$.}  
 We can equivalently show that \eqref{cN2}, which we write as
 \bea\label{a1.1}
 &&
 \EE\Big( \exp\Big(\nu 
 \bfu_n^T\frac{\bfM_n}{K_n} \Big);K_n=k \Big)
 =
\frac{C_{nk}q_{+n}^{k}}{k!}
 \PP\Big(\sum_{i=1}^{k} \wh X_{in}^{(\nu\bfu_n)}=n\Big)
 \Big(\sum_{j=1}^n q_{j }e^{ \nu u_{nj}/k}  \Big)^{k},
 \eea
 reduces correctly when  $\bfu_n= {\bf 1_n}$.
So assume $\bfu_n= {\bf 1_n}$. Then 
 $\bfu_n^T\bfM_n =\sum_{j=1}^nM_{jn}=K_n$, and the LHS of \eqref{a1.1} is
$e^{\nu } \PP(K_n=k )$,
 which
  equals
\be\label{a.40}
e^{\nu } \frac{C_{nk}q_{+n}^{k}}{k!}  
 \PP\Big(\sum_{i=1}^{k} \wh X_{in}^{(0)}=n\Big)
 \ee
  by \eqref{M4.2N1}. 
 On the other hand  the RHS of \eqref{a1.1} with  $\bfu_n= {\bf 1_n}$ equals
\bea\label{a.50}
\frac{C_{nk}q_{+n}^{k}}{k!}
 \PP\Big(\sum_{i=1}^{k} \wh X_{in}^{(\nu,\bf1_n)}=n\Big)
e^{ \nu }.
 \eea
 But when $\bfu_n =\bf1_n$, by   \eqref{3.num0} and \eqref{HM8Nn}, 
 \bean
  \PP\Big(\sum_{i=1}^{k} \wh X_{in}^{(\nu,\bf1_n)}=n\Big)
&=&
 \frac{n}{2\pi } \int_{-\pi}^{\pi} e^{-\rmi n \tau}
 \big(  
 \EE (e^{\rmi \tau \wh X_{1n}^{(\nu,\bf1_n)}}) \big)^k
 \rmd \tau 
 =
 \frac{n}{2\pi } \int_{-\pi}^{\pi} e^{-\rmi n \tau}
\Big(  
 \frac{ \sum_{j=1}^n q_{j} e^{ \nu/k} e^{\rmi \tau j} }
{ \sum_{j=1}^n q_{j} e^{ \nu/k }}\Big)^k
 \rmd \tau\cr
 &=&
 \frac{n}{2\pi } \int_{-\pi}^{\pi} e^{-\rmi n \tau}
\Big(  
 \frac{ \sum_{j=1}^n q_{j} e^{\rmi \tau j} }
{ \sum_{j=1}^n q_{j} }\Big)^k
 \rmd \tau      
=
   \PP\Big(\sum_{i=1}^{k} \wh X_{in}^{(0)}=n\Big).
 \eean
 Thus the expressions in \eqref{a.50} and \eqref{a.40} are equal. \halmos

\smallskip\noindent{\bf (b) Check \eqref{STA} 
when $p=1$.}   
Set $f(y)=y$  and  replace $\nu$ by $\rmi\nu$ in 
  \eqref{f1} to get 
  \be\label{p=1}
  f_{Y_x^{(\rmi\nu,1)}}(1)
  =
  \frac{1}{2\pi}\int _{-\infty}^\infty e^{-\rmi \tau}
  \exp\Big(\frac{\alpha x}{\Gamma(1-\alpha)}
  \int_{0}^1
 \big((e^{ \rmi(\tau +\nu ) y} -1 ) 
 -(e^{\rmi \nu y}-1)\big)
 \frac{\rmd y}{  y^{\alpha+1} }
 \Big) \rmd \tau,
 \ee
  regarding 
 $  f_{Y_x^{(\rmi\nu,1)}}(1)$ just as an abbreviation for the 
 RHS of \eqref{p=1}. 
Change variable from $\tau+\nu $ to $\tau$ and write that RHS expression as
 \be
  = \frac{e^{\rmi  \nu }}{2\pi}\int _{-\infty}^\infty e^{-\rmi \tau}
 \frac{  \exp\Big(\frac{\alpha x}{\Gamma(1-\alpha)}
  \int_{0}^1
(e^{ \rmi\tau y} -1 )  y^{-\alpha-1} \rmd y\Big)
 }
 {  
 \exp\Big(\frac{\alpha x}{\Gamma(1-\alpha)}
  \int_{0}^1
  (e^{\rmi \nu y}-1)\big)
 y^{-\alpha-1} \rmd y
 \Big)
 }
   \rmd \tau.
 \ee
We can rewrite this as
  \bea\label{p=1a}
  f_{Y_x^{(\rmi\nu,1)}}(1)
 = \frac{e^{\rmi  \nu }}{2\pi}\int _{-\infty}^\infty e^{-\rmi \tau}
 \frac{  
\EE(e^{ \rmi\tau T_\alpha})
 }
 {  \EE  (e^{\rmi \nu T_\alpha}) }   \rmd \tau
 = \frac{
e^{\rmi  \nu } f_{T_\alpha}(1) 
  }
 {  \EE  (e^{\rmi \nu T_\alpha}) },
 \eea
 where $T_\alpha$ has a stable L\'evy measure but truncated to $(0,1]$, thus
 \ben
  \EE  (e^{\rmi \tau T_\alpha}) = \exp\Big(\frac{\alpha x}{\Gamma(1-\alpha)}
  \int_{0}^1
(e^{ \rmi \tau y} -1 )  y^{-\alpha-1} \rmd y\Big).
\een
Similarly we get
  \be\label{p=1b}
  f_{Y_x^{(0)}}(1)=  \frac{1}{2\pi}\int _{-\infty}^\infty 
  e^{-\rmi \tau}
  \exp\Big(\frac{\alpha x}{\Gamma(1-\alpha)}
  \int_{0}^1
  (e^{ \rmi \tau y} -1 )  y^{-\alpha-1} \rmd y
  \Big)
  \rmd \tau=f_{T_\alpha}(1).
  \ee
    
  The mgf on the RHS of   \eqref{STA} 
  when $p=1$ is, by    \eqref{mgd5} 
  (replacing $\nu$ by $\rmi\nu$),
  \bean
    &&\EE  (e^{\rmi \nu T_{\alpha/1}})
    = e^{-\rmi \nu    x/\Gamma(1-\alpha)}
      \exp\Big(\frac{\alpha x}{\Gamma(1-\alpha)}
  \int_{0}^1
  (e^{ \rmi \nu y} -1 )  y^{-\alpha-1} \rmd y  \Big)
  =
 e^{-\rmi \nu  x /\Gamma(1-\alpha)}
 \EE  (e^{\rmi \nu T_\alpha}).
  \eean
  Thus the RHS of  \eqref{STA}   with $p=1$ is
  \bea\label{p=1c}
  \frac{  f_{Y_x^{(\rmi \nu,1)}}(1)}{f_{Y_x^{(0)}}(1)}
  \EE  (e^{\rmi \nu S_{\alpha/1}}) 
  &=&
   \frac{
 e^{\rmi  \nu } f_{T_\alpha}(1) 
  }
 {  \EE  (e^{\rmi \nu T_\alpha}) f_{T_\alpha}(1)  }
 \times
  \EE  (e^{\rmi \nu   T_\alpha})=
   e^{\rmi  \nu }.
  \eea
 Since $\sum_{j=1}^n jM_{jn}=n$, when $p=1$ the LHS of  
 \eqref{STA}  
 (replacing $\nu$ by $\rmi\nu$)
 is
 \bean
\lim_{n\to\infty}  \EE  \big( \exp(\rmi\nu) \big|K_n=\lfloor x n^\alpha\rfloor \big)
=
e^{\rmi \nu},
\eean
 equal to the RHS of \eqref{p=1c}.
\halmos
 
  {}

\bigskip
\noindent 
R.A. Maller, 
Research School of Finance, Actuarial Studies \& Statistics\\
 The Australian National University, Canberra, ACT, 0200, Australia.\\
Email: Ross.Maller@anu.edu.au\\

\vskip-0.2cm
\noindent 
S. Shemehsavar, College of Science, Technology, Engineering
 \& Mathematics\\
  Murdoch University, Perth, Western Australia\\
Email: soudabeh.shemehsavar@murdoch.edu.au  (corresponding author).\\
  \end{document}